\titlespacing*{\paragraph} {1em}{3.25ex plus 1ex minus .2ex}{0.5em}
\newtheoremstyle{mythms} 
{1em} 
{1em} 
{\itshape} 
{\parindent} 
{\bfseries} 
{} 
{.5em} 
{} 
\theoremstyle{mythms}
\newtheorem{theorem}[paragraph]{Theorem}
\newtheorem{proposition}[paragraph]{Proposition}
\newtheorem{lemma}[paragraph]{Lemma}
\newtheorem{corollary}[paragraph]{Corollary}
\renewenvironment{proof}{{\bfseries Proof.}}{\qed} 
 \newtheoremstyle{myrem} 
{1em} 
{1em} 
{\normalfont} 
{\parindent} 
{\bfseries} 
{} 
{.5em} 
{} 
\theoremstyle{myrem}
\newtheorem{remark}[paragraph]{Remark}
\newcommand{\Z}{\mathbb{Z}}
\newcommand{\G}{\mathbb{G}}
\newcommand{\F}{\mathbb{F}}
\newcommand{\EE}{\mathcal{E}}
\newcommand{\RR}{\mathcal{R}}
\renewcommand{\bf}[1]{\mathbf{#1}}
\newcommand{\id}{\mathrm{id}}
\newcommand{\coker}{\mathrm{coker}}
\newcommand{\Res}{\mathrm{Res}}
\newcommand{\Ind}{\mathrm{Ind}}
\newcommand{\Hom}{\mathrm{Hom}}
\newcommand{\Stab}{\mathrm{Stab}}
\newcommand{\Irr}{\mathrm{Irr}}
\newcommand{\IBr}{\mathrm{IBr}}
\newcommand{\ad}{\mathrm{ad}}
\newcommand{\Mod}{\mathrm{-mod}}
\newcommand{\normal}{\trianglelefteq}
\newcommand{\ol}[1]{\overline{#1}}
\newcommand{\wh}[1]{\widehat{#1}}
\newcommand{\wt}[1]{\widetilde{#1}}
\newcommand{\dw}{{\dot{w}}}
\title{On Extending Unipotent Representations to their Stabilizers}
\author{Matthias Klupsch}
\begin{document}

\maketitle
\begin{abstract}
 We show that irreducible unipotent  representations of split Levi subgroups of finite groups
 of Lie type extend to their stabilizers inside the normalizer of the given Levi subgroup. For this purpose, we extend the multiplicity-freeness theorem from regular embeddings to arbitrary isotypies. 
\end{abstract}

\section{Introduction}
 Given a normal subgroup $N$ of a finite group $G$, one obtains 
 an action of the quotient group $G/N$ on the set of irreducible
 representations of $N$. If an irreducible representation of $N$
 is fixed under this action, it is straightforward to ask if 
 this representation might extend to a representation of $G$.
 
 That this is not always the case is nicely explained by Schur's
 theory of projective representations. 
 The obstruction for an irreducible representation of $N$ to extend
 to a representation of $G$ lies in (the cohomology class of) a certain 
 $2$-cocycle associated with this representation.
 
 When considering representations of finite groups of Lie type,
 the question of whether irreducible cuspidal representations of split Levi subgroups
 extend to their stabilizers inside the normalizer of the given Levi subgroup
 is of much interest (cf. \cite[Cor. 3.13]{geck1996towards}).
 
In the case of representations in characteristic $0$, this question was answered by Lusztig and Geck to the affirmative (see \cite[Thm. 8.6.]{lusztig1984characters} and \cite{geck1993harish}). 
 
 Our goal with this paper is to better understand which irreducible representations of Levi subgroups extend to their stabilizers. 
 Among the irreducible representations of finite groups of Lie type, we have singled out the unipotent ones (in arbitrary non-defining characteristic) to serve as a first working example.
 
 In fact, we shall show that every irreducible unipotent representation of a split Levi subgroup of a finite group of Lie type extends to its stabilizer.
 
 In order to reduce this extension problem to a situation we can handle easily
 we need to generalize some results on unipotent representations especially
 with regards to their behaviour concerning restriction along isotypies.
 In particular, we show that restriction along isotypies maps arbitrary irreducible 
 representations to multiplicity-free representations (known previously only for regular embeddings, cf. \cite[1.7.15]{geck2020character}).
 
 This way, we are able to reduce the extension problem for unipotent representations
 to a problem which only concerns semidirect products with wreath products
 which is in some sense a  generalization of \cite[Prop. 4.3]{geck1996towards}.
 
 The structure of this paper is as follows. First we describe the purely representation theoretic problem of extending an irreducible representation
 to a semidirect product of a group with a wreath product. 
 
 Then we shall show that the normalizers of Levi subgroups are of this form
 modulo a central subgroup. 
 
 Afterwards, we prove that restriction along isotypies of irreducible representations 
 leads to multiplicity-free representations reducing this problem to the already known multiplicity-freeness theorem \cite[1.7.15]{geck2020character} for regular embeddings. Specializing this to unipotent 
 representations, we find that the restriction of an irreducible unipotent 
 representation along an isotypy remains irreducible. This was already well-known
 in characteristic $0$ (see \cite[2.3.14]{geck2020character}).

 Finally, we will be able to solve the extension problem for irreducible unipotent representation.
 
\section{Some Clifford theory}

\paragraph{}
In this section, we will consider the property of extending
representations to their stabilizer and establish connections 
to multiplicity-freeness of restriction functors.
 
\paragraph{} Let $G$ be a finite group, $N \normal G$ a normal subgroup of $G$ and $k$ an algebraically closed field of characteristic $\ell \geq 0$. 
We shall denote by $\Irr_k(G)$ the set of irreducible $k$-characters of $G$.

\paragraph{}
The $k$-linear categories $k N$-mod and $k G$-mod of finitely generated $k N$-modules and 
 $k G$-modules are related via 
 induction and restriction functors 
 $$ \Ind^G_N : k N\Mod \to kG\Mod$$
 and 
 $$ \Res^G_N: kG\Mod\to kN\Mod$$
 which are biadjoint exact functors.
 Recall that Clifford theory tells us that $\Res^G_N$ maps semisimple modules to semisimple modules.
 
 \paragraph{}
 For any $g \in G$, conjugation with $g$ defines an automorphism of $N$ and thus 
 an autoequivalence of categories 
 $$ \ad(g): kN\Mod \to kN\Mod.$$
 If $M$ is a $kN$-module, then $n \in N$ acts on $\ad(g)M$ like $g^{-1} n g$ acts on $M$.
 We denote by 
 $$S_G(M) = \Stab_G(M) = \{g \in G \:|\: \ad(g) M \cong M\}$$
 the stabilizer of $M$ in $G$.

 \begin{lemma}\label{la: equivalence multiplicity-free and extension}
  Let $X_0$ be a simple $k N$-module. The following
  statements are equivalent:
  \begin{enumerate}[(i)]
   \item There exists a simple $kG$-module $X$ for which
         $\Res^G_N X$ is multiplicity-free and has $X_0$ as a direct summand.
   \item There exists a simple $k S_G(X_0)$-module 
         $X_1$ such that $\Res^{S_G(X_0)}_{N} X_1 \cong X_0$. 
  \end{enumerate}
   In fact, if $X$ is as in (i), then any simple submodule of $\Res^{G}_{S_{G}(X_0)} X$ whose restriction to $N$ has $X_0$ as a direct summand satisfies (ii) and 
   any $k S_G(X_0)$-module $X_1$ as in (ii) satisfies that $\Ind^G_{S_G(X_0)} X_1$ is simple and satisfies (i).
 \end{lemma}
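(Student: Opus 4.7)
The plan is to reduce both implications to the (modular) Clifford correspondence between simple $kG$-modules having $X_0$ as an $N$-summand and simple $k S_G(X_0)$-modules whose $N$-restriction is $X_0$-isotypic. Writing $H := S_G(X_0)$ for brevity, Clifford theory (valid in arbitrary characteristic) tells us that induction $\Ind^G_H$ defines a bijection between isomorphism classes of simple $kH$-modules $Y$ with $\Res^H_N Y \cong e \cdot X_0$ for some integer $e \geq 1$ and simple $kG$-modules $X$ with $X_0$ a direct summand of $\Res^G_N X$. The inverse sends $X$ to any simple $kH$-submodule of $\Res^G_H X$ whose $N$-restriction contains $X_0$, all such submodules being mutually isomorphic. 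Moreover, the ramification index $e$ appearing in $\Res^H_N Y$ equals the common multiplicity with which each of the pairwise non-isomorphic $N$-conjugates $\ad(g)(X_0)$ (for $gH \in G/H$) appears in the decomposition $\Res^G_N X \cong e \cdot \bigoplus_{gH \in G/H} \ad(g)(X_0)$. In particular, $\Res^G_N X$ is multiplicity-free exactly when $e = 1$, which is precisely the condition that $Y$ is an extension of $X_0$ to $H$.

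Given this dictionary, both implications fall out directly. For (i) $\Rightarrow$ (ii), I would take $X$ as in (i) and let $X_1$ be any simple submodule of $\Res^G_H X$ whose $N$-restriction has $X_0$ as a summand; this $X_1$ is the Clifford correspondent of $X$, and the multiplicity-freeness hypothesis forces the ramification index to be $1$, whence $\Res^H_N X_1 \cong X_0$. This simultaneously yields (ii) and the first ``in fact'' clause. For (ii) $\Rightarrow$ (i), I would start from $X_1$ with $\Res^H_N X_1 \cong X_0$: by Clifford, $X := \Ind^G_H X_1$ is a simple $kG$-module, and its restriction to $N$ is $\bigoplus_{gH \in G/H} \ad(g)(X_0)$, which is manifestly multiplicity-free with $X_0$ as a direct summand. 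This establishes (i) along with the second ``in fact'' clause.

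The main point requiring care is that, outside the semisimple case, $\Res^G_H X$ is not assumed to decompose into a direct sum of simples; only its further restriction $\Res^G_N X$ is semisimple, by Clifford's basic observation recalled earlier. Accordingly, one must invoke the Clifford correspondence in its modular form, and the key computational fact underlying the whole argument is the identification of the single ramification index $e$ as both the multiplicity of $X_0$ in $\Res^H_N X_1$ and the common multiplicity of each $N$-conjugate of $X_0$ in $\Res^G_N X$. With this identification at hand, the equivalence of (i) and (ii) is essentially tautological, so I do not expect a serious obstacle beyond correctly citing the modular Clifford correspondence.
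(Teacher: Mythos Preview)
Your proposal is correct and follows essentially the same route as the paper: both directions rest on the Clifford correspondence between simple $kG$-modules lying over $X_0$ and simple $kS_G(X_0)$-modules whose $N$-restriction is $X_0$-isotypic, with the ramification index $e$ governing multiplicity-freeness. The only cosmetic difference is that the paper does not cite the correspondence as a black box but proves the needed instance inline: for (ii)$\Rightarrow$(i) it computes $\Res^G_N \Ind^G_{S_G(X_0)} X_1$ explicitly via Mackey and then checks simplicity of the induced module by hand, whereas you invoke the modular Clifford correspondence directly.
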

 \begin{proof}
  Suppose first, that (i) holds and let $X_1$ be a simple $k S_{G}(X_0)$-submodule of $\Res^{G}_{S_{G}(X_0)} X$ for which $X_0$ is a direct summand of $\Res^{S_G(X_0)}_N X_1$.
  
By usual Clifford theory, $\Res^{S_G(X_0)}_N X_1$ is a direct sum of simple $k N$-modules which are conjugate to $X_0$ under $S_{G}(X_0)$, hence isomorphic to $X_0$. 
By (i), however, $\Res^{S_G(X_0)}_N X_1$ is multiplicity-free as a direct summand of a multiplicity-free module. We thus obtain (ii).

Let us assume (ii) now and let $X = \Ind^G_{S_G(X_0)} X_1$. 
Using the Mackey formula, we see that 
$$\Res^{G}_N X \cong \bigoplus_{g \in S_G(X_0) \backslash G / S_G(X_0)} \ad(g)X_0 $$
which is clearly multiplicity-free.

If $T$ is a simple submodule of $X$, then $\Res^{G}_{N} T$ is a direct sum of conjugates of $X_0$ with each conjugate occurring at least once. Since $\Res^G_{N} X$ is multiplicity free, it follows that we have $\Res^G_{N} X = \Res^G_{N} T$ and so $X = T$
is simple.
%
%
%
 \end{proof}

\paragraph{} A very prominent example for multiplicity-freeness
in the context of finite groups of Lie type is the well-known
result that restriction along regular embeddings is multiplicity-free (see \cite[1.7.15]{geck2020character}). We shall later return to this result and generalize it to arbitrary isotypies. For the moment, we just note that in this situation, the relevant quotient group $G/N$ is abelian which makes this special case very important for us. The following result will later be used 
to show that unipotent representations in positive characteristic
remain irreducible under restriction along isotypies.

 \begin{lemma}\label{la: abelian multiplicity-free}
  Suppose that $G/N$ is abelian and that $X$ is a simple $kG$-module.
  The following statements are equivalent:
  \begin{enumerate}[(i)]
   \item The $k N$-module $\Res^G_N X$ is  simple.
   \item The $k N$-module $\Res^G_N X$ is multiplicity-free and for every nontrivial $\lambda \in \Irr_k(G/N)$ we have $\lambda \otimes X \not\cong X$ 
  \end{enumerate}
\end{lemma}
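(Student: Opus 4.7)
I would prove the two implications separately, with both arguments hinging on Lemma~\ref{la: equivalence multiplicity-free and extension} and Schur's lemma.

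For (i)~$\Rightarrow$~(ii), simplicity of $\Res^G_N X$ trivially implies multiplicity-freeness. For the second assertion, suppose $\phi\colon \lambda \otimes X \xrightarrow{\sim} X$ is a $kG$-isomorphism for some $\lambda \in \Irr_k(G/N)$. Because $\lambda$ is trivial on $N$, viewing $\phi$ on underlying vector spaces shows that it commutes with the $N$-action, hence defines a $kN$-endomorphism of the simple module $\Res^G_N X$. Schur's lemma forces $\phi$ to be a scalar $c \in k^\times$; comparing the $\lambda$-twisted and untwisted $G$-actions through the identity $\phi(gv)=\lambda(g)^{-1}g\phi(v)$ then immediately yields $\lambda(g)=1$ for all $g \in G$.

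For (ii)~$\Rightarrow$~(i), fix a simple direct summand $X_0$ of $\Res^G_N X$ and put $T := \Stab_G(X_0)$. Since $G/N$ is abelian, every subgroup between $N$ and $G$ is normal, so $T \normal G$. The multiplicity-freeness in (ii) lets me apply Lemma~\ref{la: equivalence multiplicity-free and extension} to $X$ and $X_0$, producing a simple $kT$-module $X_1$ with $\Res^T_N X_1 \cong X_0$ and $X \cong \Ind^G_T X_1$. The projection formula then gives, for every $\lambda \in \Irr_k(G/N)$,
\[
 \lambda \otimes X \;\cong\; \Ind^G_T\bigl((\lambda|_T) \otimes X_1\bigr),
\]
so whenever $\lambda|_T$ is trivial (i.e.\ $\lambda$ factors through $G/T$) one obtains $\lambda \otimes X \cong X$. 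By the hypothesis in (ii), each such $\lambda$ must then be globally trivial, so $\Irr_k(G/T) = \{1\}$. Pontryagin duality for the finite abelian group $G/T$ over $k$ forces $T = G$, whence $\Res^G_N X \cong X_0$ is simple.

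The main subtle point I expect is the final step: extracting $T = G$ from triviality of $\Irr_k(G/T)$. Over $k$ algebraically closed of characteristic $\ell$, $\Hom(-,k^\times)$ only detects the prime-to-$\ell$ part of a finite abelian group, so this is immediate when $\ell \nmid |G/N|$ and otherwise needs the structure of $G/N$ to rule out any $\ell$-part in $G/T$; everything else is a routine unwinding of Clifford theory and the projection formula.
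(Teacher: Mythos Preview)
Your approach is essentially the paper's: both directions use Schur's lemma and Lemma~\ref{la: equivalence multiplicity-free and extension} in the same way, and for (ii)$\Rightarrow$(i) both set $T:=S_G(X_0)$, obtain $X\cong\Ind^G_T X_1$, and then use the projection formula to show that every $\lambda\in\Irr_k(G/T)$ satisfies $\lambda\otimes X\cong X$. The only difference lies in the final step. You correctly observe that $\Irr_k(G/T)=\{1\}$ merely forces $G/T$ to be an $\ell$-group, and you leave this as an unresolved subtlety. The paper instead inserts an intermediate claim: since $\Res^G_T X$ is a direct sum of conjugates of $X_1$, each inducing to $X$, the module $\Ind^G_T\Res^G_T X\cong k[G/T]\otimes X$ is semisimple, and the paper asserts that this forces $|G/T|$ to be prime to $\ell$, so that a nontrivial $\lambda\in\Irr_k(G/T)$ exists whenever $T\neq G$.

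That assertion, however, does not hold, and the gap you flag cannot be closed in general. Take $G=S_3$, $N=A_3$, $\ell=2$, and let $X$ be the $2$-dimensional simple $kG$-module. Then $\Res^G_N X\cong\omega\oplus\omega^{-1}$ is multiplicity-free but not simple, so $T=N$ and $|G/T|=2=\ell$; nevertheless $k[G/T]\otimes X=\Ind^G_N\Res^G_N X\cong X\oplus X$ is semisimple, contradicting the paper's implication. Worse, since $\Irr_k(G/N)=\Hom(\Z/2,k^\times)=\{1\}$ in characteristic~$2$, condition (ii) holds while (i) fails: the lemma is false as stated. Your instinct that an extra hypothesis (for instance $\ell\nmid|G/N|$) is needed to exclude an $\ell$-part in $G/T$ is therefore exactly right.
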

\begin{proof}
 Suppose (i) holds and that $f: \lambda \otimes X \to X$ is an isomorphism of $k G$-modules for some $\lambda \in \Irr_k(G/N)$.
 
 It follows that $\Res^G_N f : \Res^G_N X \to \Res^G_N X$ is an 
 automorphism of a simple $k N$-module and Schur's Lemma now implies
 that $\Res^G_N f = \alpha \id_{\Res^G_N X}$ for some $\alpha \in k^*$.
 
 It follows that we have 
 $$\alpha \lambda(g) gx = f(\lambda(g) g x) = g f(x) = \alpha gx $$
 for all $g \in G$ and $x \in X$ and so $\lambda$ is trivial.
 
 Conversely, let us now assume (ii) and let $X_0$ be a simple direct summand of $\Res^G_N X$ and $S_G(X_0) \subseteq G$ its stabilizer.
 Also let $X_1$ be a simple submodule of $\Res^G_{S_G(X_0)} X$ such that $\Res^{S_G(X_0)}_N X_1 \cong X_0$, which exists
 by Lemma \ref{la: equivalence multiplicity-free and extension} since we assumed 
 $\Res^G_N X$ to be multiplicity-free.
 
 Moreover, we have $X \cong \Ind^{G}_{S_G(X_0)} X_1$ by the same Lemma.  
 Now, since $G/N$ is abelian, the restriction $\Res^{G}_{S_G(X_0)} X$ is a direct sum of conjugates of $X_1$ and thus 
 $$ \Ind^G_{S_G(X_0)} \Res^G_{S_G(X_0)} X \cong k G/S_G(X_0) \otimes X $$
 is semisimple which implies that the order of $G/S_G(X_0)$ is prime to $\ell$.
 
 Now, if $S_G(X_0)$ were a proper subgroup of $G$, then there would exist a nontrivial $\lambda \in \Irr_k(G/S_G(X_0))$ and we would obtain  
 $$ \lambda \otimes X \cong \Ind^G_{S_G(X_0)} (\Res^G_{S_{G}(X_0)}(\lambda) \otimes X_1) \cong X $$
 contrary to our assumptions. Thus, we have $G = S_G(X_0)$ and $X \cong X_1$ which gives us (i). 
\end{proof}

\begin{remark}
Suppose that $G = N \rtimes W$ for some subgroup $W \subseteq G$.
Note that the centralizer $C = C_{W}(N) \subseteq W$ is a normal subgroup of $G$. We let $G' = N \rtimes (W/C) \cong G/C$.

 In this situation, if $X_0$ is a simple $k N$-module, then we have $C \subseteq S_G(X_0)$ and $S_{G'}(X_0) = S_G(X_0)/C$.
 Moreover, if there exists a simple $k S_G'(X_0)$-module $X'$
 such that $\Res^{S_{G'}(X_0)}_N X' \cong X_0$, then the inflation $X$
 of $X'$ to $G$ satisfies $\Res^{S_G(X_0)}_N X \cong X_0$.
\end{remark}

\paragraph{}
In the next section, we shall investigate the structure of the normalizer of a standard Levi subgroup of a finite group of Lie 
type. We shall see that the following result on extending 
representations for certain semidirect products is exactly 
what we need.

  \begin{theorem}\label{thm: extending to wreath products}
 Let $H$ and $A$ be finite groups with $A$ acting on $H$ via group automorphisms. For a positive integer $n \in \Z_{> 0}$ we set 
 $G = (H \rtimes A) \wr S_n = H^n \rtimes (A \wr S_n)$
 as well as $N = H^n \normal G$.
 
 If every simple $k H$-module $X_0$ extends to a 
 $k S_{H \rtimes A}(X_0)$-module, 
 then every simple $k N$-module $X$ extends to a $k S_G(X)$-module.
\end{theorem}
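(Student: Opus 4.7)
The plan is to reduce the problem to tensor powers of a single $kH$-module and then apply the standard wreath-product construction.

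First, since $N = H^n$, every simple $kN$-module decomposes as an outer tensor product $X = X_1 \boxtimes \cdots \boxtimes X_n$ with each $X_i$ a simple $kH$-module. The group $A \wr S_n$ acts on $\Irr_k(H)^n$ by letting the copies of $A$ act componentwise and $S_n$ permute. Since conjugate simple $kN$-modules have conjugate stabilizers (so an extension of one yields an extension of the other), after conjugating $X$ by a suitable element of $S_n \subseteq G$ we may assume
$$X \cong Y_1^{\boxtimes m_1} \boxtimes \cdots \boxtimes Y_r^{\boxtimes m_r},$$
where $Y_1, \ldots, Y_r$ represent the pairwise distinct $A$-orbits in $\Irr_k(H)$ appearing among the $X_i$, and $m_1 + \cdots + m_r = n$.

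Next I would compute $S_G(X)$. Setting $A_j = \{a \in A \mid \ad(a) Y_j \cong Y_j\}$, so that $S_{H \rtimes A}(Y_j) = H \rtimes A_j$ (because $H$ always lies in the stabilizer), a direct analysis of the $(A \wr S_n)$-action on $\Irr_k(N) = \Irr_k(H)^n$ shows that
$$S_G(X) = H^n \rtimes \prod_{j=1}^r (A_j \wr S_{m_j}) \cong \prod_{j=1}^r \bigl((H \rtimes A_j) \wr S_{m_j}\bigr).$$
Via this isomorphism, $X$ corresponds to the outer tensor product of the $Y_j^{\boxtimes m_j}$, so it suffices to construct, for each $j$, an extension of $Y_j^{\boxtimes m_j}$ from $k H^{m_j}$ to $k((H \rtimes A_j) \wr S_{m_j})$.

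Fix such a $j$. By the hypothesis applied to the simple $kH$-module $Y_j$, there exists an extension $\wt{Y_j}$ of $Y_j$ to a $k(H \rtimes A_j)$-module. The outer tensor power $\wt{Y_j}^{\boxtimes m_j}$ is then a $k(H \rtimes A_j)^{m_j}$-module which restricts to $Y_j^{\boxtimes m_j}$, and it extends to $k((H \rtimes A_j) \wr S_{m_j})$ by letting $\sigma \in S_{m_j}$ act via the canonical permutation of tensor factors $v_1 \otimes \cdots \otimes v_{m_j} \mapsto v_{\sigma^{-1}(1)} \otimes \cdots \otimes v_{\sigma^{-1}(m_j)}$; compatibility with the semidirect product structure is a routine verification.

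The main obstacle is the stabilizer computation: the interplay between the componentwise $A$-action and the $S_n$-permutation must be tracked carefully in order to identify $S_G(X)$ as a direct product of wreath products. Once this is in place, the individual extensions follow from the hypothesis and the standard wreath-product construction, and the outer tensor product of these extensions gives the required extension of $X$ to $k S_G(X)$.
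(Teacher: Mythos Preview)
Your approach is essentially the same as the paper's: decompose $X$ as an outer tensor product, group the factors by $A$-orbit, reduce to a single orbit, conjugate to make all factors isomorphic, and then use the hypothesis together with the canonical permutation action on a tensor power. The stabilizer computation and the final extension step are both correct.

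There is, however, one slip in your normalization. Conjugating by an element of $S_n$ only permutes the factors $X_i$; it cannot change an individual $X_i$ into a chosen representative $Y_j$ of its $A$-orbit. So an $S_n$-conjugation alone will bring you to a module of the form $Z_1 \boxtimes \cdots \boxtimes Z_n$ where the $Z_i$ in positions belonging to the same block are $A$-conjugate to one another, but not necessarily equal. To actually obtain $Y_1^{\boxtimes m_1} \boxtimes \cdots \boxtimes Y_r^{\boxtimes m_r}$ you must further conjugate by an element $(a_1,\dots,a_n) \in A^n$, choosing $a_i$ so that $\ad(a_i) Z_i \cong Y_{j(i)}$. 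This is exactly what the paper does (after first reducing to a single orbit $r=1$): it picks $a_i \in A$ with $\ad(a_i) X_i \cong X_1$ and conjugates by $a = (a_i)_i \in A^n$. Once you replace ``an element of $S_n$'' by ``an element of $A \wr S_n$'' in your second paragraph, your argument goes through and coincides with the paper's.
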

\begin{proof}
 Let $X$ be a simple $kN$-module. 
 We can write 
 $$  X = \bigotimes_{i = 1}^n X_{i}$$
 where $X_i$ is a simple $k H$-module.

 We identify $H^n$ with $H^{\{1,\dots,n\}}$ and for a subset $J \subseteq \{1,\dots,n\}$ we shall denote by $H^{J}$ the subgroup
 of $H^n$ which under the $i$th projection $H^n \to H$ has image $H$ if $i \in J$ and $\{1\}$ else.
 
 We have the equivalence relation
 $i \sim j$ if and only if $X_i$ is isomorphic to $\ad(a) X_j$ for some $a \in A$ and write $\{1,\dots,n\} = \bigsqcup_{j = 1}^r I_j$ for the associated partition. The stabilizer $S_G(X)$ is a subgroup of 
 $$ \prod_{j = 1}^r \left( H^{I_j} \rtimes (A \wr S_{I_j}) \right)$$
 where we denote the $i$th factor by $G_{I_j}$.
 
 Let us set $X_{I_j} = \bigotimes_{i \in I_j} X_i$ so that we have
 $X = \bigotimes_{j = 1}^r X_{I_j}$.
 It follows that we have $S_{G}(X) = \prod_{j = 1}^r S_{G_{I_j}}(X_{I_j})$.
 Clearly, there exists a $k S_{G}(X)$-module $Y$ with
 $\Res^{S_G(X)}_{N} Y \cong X$ if only if for every 
 $1 \leq i \leq r$ we have 
 $$\Res^{S_{G_{I_j}}(X_{I_j})}_{N} Y_{I_j} \cong X_{I_j}$$ 
 for some $k S_{G_{I_j}}(X_{I_j})$-module $Y_{I_j}$.
 
 We may thus assume that $r = 1$.
 For each $2 \leq i \leq n$ let $a_i \in A$ be such that 
 $\ad(a_i)X_i \cong X_1$. With $a = (a_i)_i \in A^n$,
 we get $\ad(a) X \cong X_1^{\otimes n}$.
 
 Clearly, if $\ad(a) X$ extends to a $k S_G(\ad(a) X)$-module,
 then $X$ extends to a $k S_G(X)$-module.
 We may thus assume that $X \cong X_1^{\otimes n}$.
 Note that we then have $S_G(X) = S_{H \rtimes A}(X_1) \wr S_n$.
 
 Let $\wt{X_1}$ be an extension of $X_1$ to $S_{H \rtimes A}(X_1)$. The 
 $k (H \rtimes A)^n$-module $\wt{X} = \wt{X_1}^{\otimes n}$ becomes 
 a $k S_G(X)$-module in a natural way and we are done. 
\end{proof}
 
\section{Representatives for the relative Weyl group}

\subsection{Root data and isotypies}

\paragraph{}
Recall that a root datum is a quadruple $\RR = (X,\Phi,X^\vee,\Phi^\vee)$ where $X$ and $X^\vee$ are free abelian groups of finite rank together with a perfect pairing $X \times X^\vee \to \Z$ 
$\Phi$ and $\Phi^\vee$ are finite subsets of $X$ and $X^\vee$, respectively, together with a bijection $\Phi \to \Phi^\vee, \alpha \mapsto \alpha^\vee$
such that 
\begin{enumerate}[(i)]
 \item $\langle \alpha , \alpha^\vee \rangle = 2$ for all $\alpha \in \Phi$,
 \item $2 \alpha \notin \Phi$ for all $\alpha \in \Phi$,
 \item $\Phi$ and $\Phi^\vee$ are invariant under the simple reflections $s_\alpha : X \to X$ and $s_\alpha^\vee: X^\vee \to X^\vee$ defined 
by 
$$ s_\alpha(\chi) = \chi - \langle \chi, \alpha^\vee \rangle \alpha$$
and 
$$ s_\alpha^\vee(\gamma) = \gamma - \langle \alpha, \gamma \rangle \alpha^\vee,$$
respectively.
\end{enumerate}
Given a set $\Delta \subseteq \Phi$ of simple roots, the quintuple
$\wh{\RR}= (X,\Phi,X^\vee,\Phi^\vee,\Delta)$ is called a \emph{based root datum} associated with $\RR$.

\paragraph{}
Let $\RR = (X,\Phi,X^\vee,\Phi^\vee)$ and $\RR' = (X',\Phi',X'^\vee,\Phi'^\vee)$ be two root data and $p$ a prime number.
Like in \cite{taylor2019rootdata}, a $p$-morphism of root 
data $\RR' \to \RR$ is a triple $(f,q,\tau)$ where $f : X' \to X$
is a morphism of groups, $q : \Phi \to p^{\Z_{\geq 0}}$ a map and 
$\tau : \Phi \to \Phi'$ a bijection such that 
$$f(\tau(\alpha)) = q(\alpha) \alpha  $$
for all $\alpha \in \Phi$ and 
$$f^\vee(\alpha^\vee) = q(\alpha) \tau(\alpha)^\vee $$
where $f^\vee: X'^\vee \to X^\vee$ is the transposed map of $f$
with respect to the perfect pairings between $X$ and $X^\vee$ 
as well as $X'$ and $X'^\vee$. 
As the map $f$ already determines $q$ and $\tau$, we shall also sometimes call $f$ a $p$-morphism of root data.
\paragraph{}
If $\Delta \subseteq \Phi$ and $\Delta'\subseteq \Phi'$ are sets 
of simple roots such that $\tau(\Delta) = \Delta'$, then we shall say that  
$(f,q,\tau)$ (or simply $f$) is a $p$-morphism of based root data 
$\wh{\RR} \to \wh{\RR'}$. 
\paragraph{}
We shall write $\Hom_{p}(\RR',\RR)$ for the set of $p$-morphisms of root data $\RR' \to \RR$ and $\Hom_{p}(\wh{\RR'},\wh{\RR})$ for the set of $p$-morphisms of based root data $\wh{\RR'} \to \wh{\RR}$.

\paragraph{} In the following, all our algebraic groups will be defined over $\F = \ol{\F_p}$, the algebraic closure of the prime field $\F_p$ with $p$ elements.
The algebraic groups $\G_a$ and $\G_m$ are just the additive group $(\F,+)$ and the multiplicative group $(\F - \{0\}, \cdot)$, respectively. 

\paragraph{} 
Let $\bf{G}$ be a connected reductive group and $\bf{T} \subseteq \bf{G}$ a maximal torus of $\bf{G}$.
The root datum of $\bf{G}$ associated to $\bf{T}$
will be denoted by $$\RR(\bf{G},\bf{T}) = (X(\bf{T}),\Phi(\bf{G},\bf{T}),X^\vee(\bf{T}),\Phi^\vee(\bf{G},\bf{T})) = (X,\Phi,X^\vee,\Phi^\vee).$$
Here, $X(\bf{T})$ is the character group of $\bf{T}$ and $X^\vee(\bf{T})$ the cocharacter group.

If $\bf{B} \subseteq \bf{G}$ is a Borel subgroup containing $\bf{T}$ and $\Delta \subseteq \Phi$ the corresponding set of simple roots, we let $\wh{\RR}(\bf{G},\bf{B},\bf{T})$ denote the corresponding based root system. 

\paragraph{}
For any element $g \in \bf{G}$ conjugation with $g$ will be denoted by $c_g : \bf{G} \to \bf{G}$.

\paragraph{}
For every $\alpha \in \Phi$, there exists a morphism 
$u_\alpha : \G_a \to \bf{U}_\alpha$ where $\bf{U}_\alpha \subseteq \bf{G}$ is a minimal connected unipotent subgroup of $\bf{G}$
normalized by $\bf{T}$ such that 
$$ t u_\alpha(x) t^{-1} = u_\alpha(\alpha(t)x) $$
for all $x \in \G_a$ and $t \in \bf{T}$. 
For $\bf{B} \subseteq \bf{G}$ a Borel subgroup containing $\bf{T}$ and $\Delta \subseteq \Phi$ its corresponding set of simple roots fixing morphisms $\{u_\alpha\}_{\alpha \in \Delta}$ as above,  
we shall call the tuple $(\bf{G},\bf{B},\bf{T}, \{u_\alpha \}_{\alpha \in \Delta})$
a \emph{pinning} of $(\bf{G},\bf{T})$.

\paragraph{} 
Let $\bf{G}'$ be another connected reductive group and $\varphi: \bf{G} \to \bf{G}'$ a morphism 
of algebraic groups. Recall that $\varphi$ is called an \emph{isotypy}
if $\varphi(\bf{G})$ contains the derived subgroup $[\bf{G}',\bf{G}']$ and $\ker(\varphi)$ is central.
Since we have $\bf{G}' = Z(\bf{G}')[\bf{G}',\bf{G}']$, it follows that 
one has $\bf{G}' = Z(\bf{G}')\varphi(\bf{G})$ in this case.

\paragraph{}\label{par: isotypy maps to p-morphism}
Moreover, if $\bf{T} \subseteq \bf{G}$ and $\bf{T}'\subseteq \bf{G}'$
are maximal tori such that $\varphi(\bf{T}) \subseteq \bf{T}'$,
then we shall write $\varphi : (\bf{G},\bf{T}) \to (\bf{G}',\bf{T}')$. If $\varphi$ is an isotypy $(\bf{G},\bf{T}) \to (\bf{G}',\bf{T}')$, then we obtain a $p$-morphism of root data
$$\RR(\varphi) = (f_\varphi,q,\tau): \RR(\bf{G}',\bf{T}') \to \RR(\bf{G},\bf{T})$$ where $f_\varphi(\chi') = \chi' \circ \varphi$ for $\chi' \in X(\bf{T}')$.
Clearly, if $\psi': (\bf{G}',\bf{T}') \to (\bf{G}'',\bf{T}'')$
is a second isotypy, then we have $\RR(\psi \circ \varphi) = \RR(\varphi) \circ \RR(\psi)$.

\paragraph{}\label{par: isotypies compatible with pinnings}
Furthermore, if $(\bf{G},\bf{B},\bf{T},\{u_\alpha\}_{\alpha \in \Delta})$ and $(\bf{G}',\bf{B}',\bf{T}',\{u_{\alpha'}\}_{\alpha' \in \Delta'})$ are pinnings of $(\bf{G},\bf{T})$ and $(\bf{G}',\bf{T}')$, respectively, we shall say that $\varphi$ is compatible with these pinnings and write 
$\varphi : (\bf{G},\bf{B},\bf{T},\{u_\alpha\}) \to (\bf{G}',\bf{B}',\bf{T}',\{u_{\alpha'}\})$ if we have $\varphi(\bf{B}) \subseteq \bf{B}'$ and $\varphi(\bf{T}) \subseteq \bf{T}'$ 
as well as 
$$ \varphi(u_{\alpha}(x)) = u_{\tau(\alpha)}(x^{q(\alpha)}) $$
for all $x \in \G_a$ and $\alpha \in \Delta$.

\paragraph{}
We write $\Hom((\bf{G},\bf{T}),(\bf{G}',\bf{T}'))$
for the set of isotypies $(\bf{G},\bf{T}) \to (\bf{G}',\bf{T}')$
and likewise 
$\Hom((\bf{G},\bf{B},\bf{T},\{u_\alpha\}_{\alpha \in \Delta}),(\bf{G}',\bf{B}',\bf{T}',\{u_\alpha'\}_{\alpha' \in \Delta'}))$
for the set of isotypies described
in \ref{par: isotypies compatible with pinnings}
\begin{proposition}[{\cite[Thm. 3.8., Prop. 3.13]{taylor2019rootdata}}]\label{prop: bijection isotypies p-morphisms}
 The map
 \begin{align*}
  \Hom((\bf{G},\bf{T}),(\bf{G}',\bf{T}')) &\to \Hom_{p}(\RR(\bf{G}',\bf{T}'),\RR(\bf{G},\bf{T})),
 \\ \varphi &\to \RR(\varphi)
 \end{align*}
is surjective and $\RR(\varphi_1) = \RR(\varphi_2)$ if and only if 
$\varphi_2 = c_{t'} \circ \varphi_1$ for some $t' \in \bf{T}'$.
The map
\begin{align*}
 \Hom((\bf{G},\bf{B},\bf{T},\{u_\alpha\}_{\alpha \in \Delta}),(\bf{G}',\bf{B}',\bf{T}',&\{u_\alpha'\}_{\alpha' \in \Delta'})) 
\to \\ &\Hom_{p}(\wh{\RR}(\bf{G}',\bf{T}'),\wh{\RR}(\bf{G},\bf{T})), 
\\ \varphi \to &\RR(\varphi) 
\end{align*}
is a bijection.
\end{proposition}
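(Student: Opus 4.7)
The plan is to follow the classical Chevalley--Steinberg isogeny theorem and extend it to the setting of isotypies between arbitrary connected reductive groups. The strategy has three parts: first one builds the isotypy from a $p$-morphism, then one verifies that in the pinned case the lift is completely rigidified (the bijection statement), and finally one reads off the unpinned version as a consequence.

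For the construction, fix a $p$-morphism $(f,q,\tau)$ of based root data. The map $f: X(\bf{T}') \to X(\bf{T})$ corresponds by character-group duality to a unique morphism of tori $\varphi_{\bf{T}}: \bf{T} \to \bf{T}'$ characterised by $\chi' \circ \varphi_{\bf{T}} = f(\chi')$ for all $\chi' \in X(\bf{T}')$. For each simple root $\alpha \in \Delta$, the pinning compatibility forces the prescription $\varphi(u_\alpha(x)) = u_{\tau(\alpha)}(x^{q(\alpha)})$. Representatives of the simple reflections in $\bf{G}$ can be built explicitly from the $u_{\pm\alpha}$, so this determines $\varphi$ on all of $\bf{G}$ (which is generated by $\bf{T}$ and the $u_{\pm\alpha}$ for $\alpha \in \Delta$), provided a morphism of algebraic groups with these values exists.

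The hard part --- and the main obstacle --- is to verify that this prescription respects the defining relations of $\bf{G}$, namely the torus relations, the Weyl-group braid relations, and above all the Chevalley commutator relations $[u_\alpha(x), u_\beta(y)] = \prod_{i,j > 0} u_{i\alpha + j\beta}(N_{ij}^{\alpha\beta} x^i y^j)$ for non-proportional $\alpha, \beta$. The compatibility conditions $f(\tau(\alpha)) = q(\alpha)\alpha$ and $f^\vee(\alpha^\vee) = q(\alpha)\tau(\alpha)^\vee$ built into the definition of a $p$-morphism are exactly what is needed to match the structure constants after applying $\tau$ on indices and the $q(\alpha)$-power Frobenius on arguments; verification requires a Chevalley--Steinberg-style case analysis on the indecomposable components, with the exceptional cases involving the twisting of long and short roots which is precisely what makes the framework of $p$-morphisms rather than ordinary morphisms of root data necessary. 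Once the pinned bijection is established, the surjectivity in the unpinned case follows by choosing compatible pinnings on both sides (after Weyl-conjugating if necessary so that $\tau(\Delta)$ becomes a system of simple roots of $\bf{T}'$ in some Borel) and invoking the pinned statement. For the uniqueness clause, two isotypies $\varphi_1, \varphi_2$ inducing the same $p$-morphism agree on $\bf{T}$ by character-group duality, so they differ on each $u_\alpha$ only by a scalar $d_\alpha \in \F^*$; the commutator relations force $\alpha \mapsto d_\alpha$ to extend to a character of the root lattice $\Z\Phi \subseteq X(\bf{T}')$, which is realised as $\alpha \mapsto \alpha(t')$ for some $t' \in \bf{T}'$, giving $\varphi_2 = c_{t'} \circ \varphi_1$.
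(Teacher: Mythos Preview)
This proposition carries no proof in the paper: it is stated with an explicit attribution to \cite[Thm.~3.8, Prop.~3.13]{taylor2019rootdata} and used as a black box throughout. So there is no ``paper's own proof'' to compare against; the author simply imports the result from Taylor.

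Your sketch is a reasonable outline of the classical isogeny-theorem argument (Chevalley, Steinberg, SGA3) that the cited reference also follows in substance. Two small points are worth flagging. First, in your uniqueness paragraph the root lattice sitting inside $X(\bf{T}')$ should be $\Z\Phi'$ (the roots of $(\bf{G}',\bf{T}')$), not $\Z\Phi$; the scalars $d_\alpha$ are naturally indexed by $\tau(\alpha) \in \Phi'$, and $\Z\Phi$ lives in $X(\bf{T})$, not $X(\bf{T}')$. Second, producing $t' \in \bf{T}'$ from a homomorphism $\Z\Phi' \to \F^*$ requires one further step you omit: extending this homomorphism to all of $X(\bf{T}')$, which uses that $\F^*$ is divisible (since $\F = \ol{\F_p}$). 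Without this extension you only obtain an element of the adjoint torus, not of $\bf{T}'$ itself. Neither point is fatal to the architecture of your argument, but since the paper treats the proposition as a citation, reproducing a full proof here is in any case beyond its scope.
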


\begin{remark}\label{remark: isotypies and pinnings}
 Note that a consequence of this is that if we have an isotypy
 $\varphi: (\bf{G},\bf{T}) \to  (\bf{G}',\bf{T}')$  with $\varphi(\bf{B}) \subseteq \bf{B}'$,  
 then there exists $t' \in \bf{T}'$ such that 
 $$ \varphi(u_{\alpha}(x)) = t' u_{\tau(\alpha)}(x^{q(\alpha)}) t'^{-1}  = u_{\tau(\alpha)}(\tau(\alpha)(t') x^{q(\alpha)}) $$
 for all $x \in \G_a$ and $\alpha \in \Delta$.
 In other words, the $\bf{T}'$-conjugate $c_{t'^{-1}} \circ \varphi$ is an isotypy 
 $(\bf{G},\bf{B},\bf{T},\{u_\alpha\}) \to (\bf{G}',\bf{B}',\bf{T}',\{u_{\alpha'}\})$.
\end{remark}

\begin{remark}\label{remark: composition of compatible isotypies}
 For $i = 1,2$, let $\varphi_i: (\bf{G}_{i},\bf{T}_{i}) \to (\bf{G}_{i+1},\bf{T}_{i+1})$ be an isotypy between connected reductive groups and set $\varphi_3 = \varphi_2 \circ \varphi_1$.
 Suppose that we have pinnings $(\bf{G}_{i},\bf{B}_{i},\bf{T}_{i},\{u_\alpha\}_{\alpha \in \Delta_i})$ of $(\bf{G}_i,\bf{T}_i)$ ($i = 1,2,3$).
 Then if two of the three morphisms $\varphi_i$ are  
 compatible with the given pinnings, it follows easily that the third one is compatible as well. 
\end{remark}

\subsection{Steinberg morphisms and isotypies}
\paragraph{}
Recall from \cite[1.4.1 -- 1.4.5]{geck2020character} that a Frobenius morphism of $\bf{G}$ is 
a morphism $F : \bf{G} \to \bf{G}$ of algebraic groups that encapsulates an 
$\F_q$-rational structure on $\bf{G}$ for some $p$-power $q$ such that $\F_{q}$-rational points are the set of fixed points $\bf{G}^F$.
More generally, a Steinberg morphism of $\bf{G}$ is a 
morphism $F : \bf{G}\to\bf{G}$ of algebraic groups 
for which some power $F^m$ is a Frobenius morphism.
Steinberg morphisms are bijections and thus, in particular,
isotypies.

\paragraph{}
We say that a pinning 
$(\bf{G},\bf{B},\bf{T},\{u_\alpha\}_{\alpha \in \Delta})$
is compatible with a Steinberg morphism $F : \bf{G} \to \bf{G}$
if $F$ is an isotypy 
$(\bf{G},\bf{B},\bf{T},\{u_\alpha\}_{\alpha \in \Delta}) \to 
(\bf{G},\bf{B},\bf{T},\{u_\alpha\}_{\alpha \in \Delta})$.
Note that $\bf{T}$ is then a maximally split torus
in the sense of \cite[1.4.10]{geck2020character}.

\begin{lemma}
 Let $F : \bf{G} \to \bf{G}$ be a Steinberg morphism.
 Then there exists an $F$-stable maximal torus $\bf{T} \subseteq \bf{G}$ and a pinning 
 $(\bf{G},\bf{B},\bf{T},\{u_\alpha\}_{\alpha \in \Delta})$ of $(\bf{G},\bf{T})$
which is compatible with $F$. 
\end{lemma}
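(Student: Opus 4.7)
The plan is to first produce an $F$-stable pair $(\mathbf{B},\mathbf{T})$ and then, starting from an arbitrary pinning, to rescale the root group morphisms $u_\alpha$ by well-chosen scalars $c_\alpha \in \F^\times$ so as to kill off the torus discrepancy predicted by Remark \ref{remark: isotypies and pinnings}.

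First, the existence of an $F$-stable maximal torus inside an $F$-stable Borel subgroup is standard. Since $F$ is a Steinberg morphism, a suitable power is a Frobenius morphism, so $F$ has finite fixed point set on $\bf{G}$ and the Lang--Steinberg theorem is available. Applying it successively to the transitive $\bf{G}$-varieties of Borel subgroups and of maximal tori of a given Borel subgroup yields $F(\bf{B}) \subseteq \bf{B}$ and $F(\bf{T}) \subseteq \bf{T}$ for some Borel subgroup $\bf{B} \supseteq \bf{T}$.

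Second, pick any pinning $(\bf{G},\bf{B},\bf{T},\{u_\alpha\}_{\alpha \in \Delta})$ extending this pair. Since $F(\bf{B}) \subseteq \bf{B}$, Remark \ref{remark: isotypies and pinnings} provides $t \in \bf{T}$ such that
$$F(u_\alpha(x)) = u_{\tau(\alpha)}\bigl(\tau(\alpha)(t)\, x^{q(\alpha)}\bigr) \qquad (\alpha \in \Delta,\ x \in \G_a),$$
where $\tau$ is the $\tau$-datum associated with $\RR(F)$; in particular $\tau$ permutes $\Delta$. The problem is now to replace the pinning by $u_\alpha'(x) = u_\alpha(c_\alpha x)$ with suitable $c_\alpha \in \F^\times$ so that the same identity holds without the $\tau(\alpha)(t)$-factor. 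A direct computation shows that this compatibility with $F$ is equivalent to the system
$$c_{\tau(\alpha)} = \tau(\alpha)(t)\, c_\alpha^{q(\alpha)} \qquad (\alpha \in \Delta).$$

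The main obstacle is to solve this system consistently. Decompose $\Delta$ into $\tau$-orbits and fix one orbit, say $\alpha_0, \alpha_1, \dots, \alpha_{m-1}$ with $\tau(\alpha_i) = \alpha_{i+1}$ (indices mod $m$). Freely choosing $c_{\alpha_0}$ determines $c_{\alpha_1}, \dots, c_{\alpha_{m-1}}$ by the recursion, and consistency upon returning to $\alpha_0$ becomes a single equation of the form $c_{\alpha_0}^Q = C$ with $Q = q(\alpha_0)q(\alpha_1)\cdots q(\alpha_{m-1})$ a power of $p$ and $C \in \F^\times$ an explicit product of values $\alpha_i(t)$. Because $\F = \ol{\F_p}$ is algebraically closed (and in particular contains all $p$-power roots of any nonzero element), such $c_{\alpha_0}$ exists, and running the recursion then yields the desired scalars on the whole orbit. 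Performing this on each $\tau$-orbit separately produces a pinning compatible with $F$, which completes the proof.
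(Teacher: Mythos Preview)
Your argument is correct in outline, with one small computational slip: carrying the recursion around a $\tau$-orbit of length $m$ and closing up yields $c_{\alpha_0} = C^{-1}\, c_{\alpha_0}^{\,Q}$, so the consistency condition is $c_{\alpha_0}^{\,Q-1} = C$ rather than $c_{\alpha_0}^{\,Q} = C$. This is still solvable in $\F^\times$: since some power of $F$ is a Frobenius, the product $Q$ of the $q(\alpha_i)$ along any $\tau$-orbit is a $p$-power strictly greater than $1$, hence $Q-1 \geq 1$, and $x^{Q-1} = C$ has a root because $\F$ is algebraically closed. Note that $Q-1$ is prime to $p$, so the parenthetical about $p$-power roots is not the operative fact here.

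The paper reaches the same modified pinning by a more uniform device. Rescaling each $u_\alpha$ by $\alpha(s)$ for a single $s \in \bf{T}$ is precisely conjugating the whole pinning by $s$, and one checks directly that your system $c_{\tau(\alpha)} = \tau(\alpha)(t)\, c_\alpha^{\,q(\alpha)}$ is solved by $c_\alpha = \alpha(s)$ exactly when $F(s) = s\, t^{-1}$ in $\bf{T}$. The paper then produces such an $s$ in one stroke from the surjectivity of the Lang map on the torus. Your orbit-by-orbit solution has the merit of being fully explicit and of not invoking Lang--Steinberg a second time, while the paper's packaging avoids the bookkeeping and makes transparent that the obstruction is a single torus equation.
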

\begin{proof}
By \cite[1.4.10]{geck2020character} there exists an $F$-stable maximal torus $\bf{T} \subseteq \bf{G}$ and an $F$-stable 
Borel subgroup $\bf{B}$ containing $\bf{T}$.
Let $(\bf{G},\bf{B},\bf{T},\{u_\alpha\}_{\alpha \in \Delta})$ be some pinning. It follows from Remark (\ref{remark: isotypies and pinnings})  that there exists $t' \in \bf{T}$ such that we have 
$$ F(u_{\alpha}(x)) = t' u_{\tau(\alpha)}(x^{q(\alpha)}) t'^{-1}$$
for all $x \in \G_a$ and $\alpha \in \Delta$. Here, $\tau$ and $q$ are like in 
\ref{par: isotypy maps to p-morphism} associated with the isotypy $F$.
By \cite[1.4.8]{geck2020character}, there exists $t \in \bf{T}$ such 
that $F(t) = t t'^{-1}$ and thus 
$$ F(t u_{\alpha}(x) t^{-1}) = t u_{\tau(\alpha)}(x^{q(\alpha)}) t^{-1}$$
and so the pinning 
$(\bf{G},\bf{B},\bf{T},\{c_t \circ u_\alpha \}_{\alpha \in \Delta})$ is compatible with $F$.
\end{proof}

\begin{lemma}\label{la: rational isotypies and pinnings}
 Let $(\bf{G},\bf{B},\bf{T},\{u_\alpha\}_{\alpha \in \Delta})$ 
 and $(\bf{G}',\bf{B}',\bf{T}',\{u_{\alpha'}\}_{\alpha' \in \Delta'})$
 be two pinnings and let $F :\bf{G} \to \bf{G}'$ and $F': \bf{G}' \to \bf{G}'$ be Steinberg
 morphisms which are compatible with these.
 
 If $\varphi: (\bf{G},\bf{T}) \to (\bf{G}',\bf{T}')$ is an isotypy with $\varphi(\bf{B}) \subseteq \bf{B}'$ and $F' \circ \varphi = \varphi \circ F$, then for every
  $t' \in \bf{T}'$ such that $\psi = c_{t'} \circ \varphi $ is compatible
  with the given pinnings
 we have $F' \circ \psi = \psi \circ F$.
 
\end{lemma}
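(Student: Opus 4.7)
The plan is to observe that both $F' \circ \psi$ and $\psi \circ F$ are isotypies $(\bf{G},\bf{T}) \to (\bf{G}',\bf{T}')$ compatible with the chosen pinnings, and to show that they induce the same $p$-morphism of based root data; the bijectivity part of Proposition \ref{prop: bijection isotypies p-morphisms} will then force equality.

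First, I would note that $F$ and $F'$ are isotypies compatible with the pinnings $(\bf{G},\bf{B},\bf{T},\{u_\alpha\})$ and $(\bf{G}',\bf{B}',\bf{T}',\{u_{\alpha'}\})$ (each viewed as an isotypy from a pinning to itself), while $\psi$ is compatible with $(\bf{G},\bf{B},\bf{T},\{u_\alpha\}) \to (\bf{G}',\bf{B}',\bf{T}',\{u_{\alpha'}\})$ by the hypothesis on $t'$. Applying Remark \ref{remark: composition of compatible isotypies} to $\psi \circ F$ (resp.\ to $F' \circ \psi$) shows that both compositions are isotypies $(\bf{G},\bf{B},\bf{T},\{u_\alpha\}) \to (\bf{G}',\bf{B}',\bf{T}',\{u_{\alpha'}\})$.

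Next, I would compute the associated $p$-morphisms of based root data using the functoriality recorded in \ref{par: isotypy maps to p-morphism}:
$$\RR(\psi \circ F) = \RR(F) \circ \RR(\psi) \quad \text{and} \quad \RR(F' \circ \psi) = \RR(\psi) \circ \RR(F').$$
Since $\psi = c_{t'} \circ \varphi$ and the first part of Proposition \ref{prop: bijection isotypies p-morphisms} tells us that $\RR$ is insensitive to conjugation by elements of the target torus, we have $\RR(\psi) = \RR(\varphi)$. On the other hand, the hypothesis $F' \circ \varphi = \varphi \circ F$ translates via functoriality into the identity $\RR(\varphi) \circ \RR(F') = \RR(F) \circ \RR(\varphi)$ of $p$-morphisms of root data. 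Combining these yields
$$\RR(F' \circ \psi) = \RR(\varphi) \circ \RR(F') = \RR(F) \circ \RR(\varphi) = \RR(\psi \circ F).$$

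Finally, since both $F' \circ \psi$ and $\psi \circ F$ lie in $\Hom((\bf{G},\bf{B},\bf{T},\{u_\alpha\}),(\bf{G}',\bf{B}',\bf{T}',\{u_{\alpha'}\}))$ and produce the same $p$-morphism of based root data, the bijectivity statement of Proposition \ref{prop: bijection isotypies p-morphisms} forces them to be equal, which is the claim. The argument is essentially a formal manipulation once the compatibility of the two compositions with the pinnings has been secured; I do not foresee a serious obstacle, but the one point requiring care is the reduction of equality of isotypies to equality of their associated $p$-morphisms via the pinning condition.
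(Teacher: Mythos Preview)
Your proof is correct and follows essentially the same approach as the paper: both argue that $F'\circ\psi$ and $\psi\circ F$ are pinning-compatible isotypies inducing the same $p$-morphism of based root data, and then invoke the bijectivity in Proposition~\ref{prop: bijection isotypies p-morphisms}. Your write-up simply makes explicit the intermediate steps (invoking Remark~\ref{remark: composition of compatible isotypies} for compatibility of the compositions and noting $\RR(\psi)=\RR(\varphi)$) that the paper leaves implicit.
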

\begin{proof}
 Let $t' \in \bf{T}'$ be 
 such that 
 $\psi = c_{t'} \circ \varphi$ 
 is 
 compatible with the pinnings.
 
 Since $F' \circ \varphi = \varphi \circ F$ and $F$ and $F'$ are compatible with the pinnings, Proposition
 \ref{prop: bijection isotypies p-morphisms} implies that $F' \circ \psi$ and $\psi \circ F$ are two isotypies which are compatible
 with the given pinnings and which induce the same $p$-morphism of root data. Proposition
 \ref{prop: bijection isotypies p-morphisms}
 now implies that these isotypies coincide.
\end{proof} 
  
\begin{remark}\label{remark: rational isotypies and pinnings}
 Note that \ref{remark: isotypies and pinnings} implies that 
 there always exist a $t' \in \bf{T}'$ which satisfies 
 the hypothesis of Lemma \ref{la: rational isotypies and pinnings}
 
 Proposition \ref{prop: bijection isotypies p-morphisms} implies that the isotypy $\psi = c_{t'} \circ \varphi$ is the unique $\bf{T}'$-conjugate of $\varphi$ 
 which which is compatible with the pinnings. It follows that the element $t' \in \bf{T}'$ is unique up to
 an element of $Z(\bf{G}')$.

 Furthermore, the identities
 $F' \circ \varphi = \varphi \circ F$ and 
 $F' \circ \psi = \psi \circ F$ imply that 
 $$c_{t'^{-1} F'(t')} F' \circ \varphi = \varphi \circ F = F' \circ \varphi $$
 and since we have $\bf{G}' = Z(\bf{G}')^\circ \varphi(\bf{G})$ 
 it follows that $t'^{-1}F'(t') \in Z(\bf{G}')$, so the 
 coset $t' Z(\bf{G}')$ is $F'$-stable. In particular,
 if $Z(\bf{G}')$ is connected, then \cite[1.4.8]{geck2020character} implies that $t' \in \bf{T}'$
 can be chosen to satisfy $F'(t') = t'$.
\end{remark}

\subsection{Normalizers of standard Levi subgroups}



\paragraph{}
As before, we fix a connected reductive group $\bf{G}$
with maximal torus $\bf{T}$ and assume that we have a Steinberg morphism $F : \bf{G} \to \bf{G}$ and a pinning $(\bf{G},\bf{B},\bf{T},\{u_\alpha\}_{\alpha \in \Delta})$ which is compatible with $F$.
We denote the corresponding based root system by $\wh{\RR} = (X,\Phi,X^\vee,\Phi^\vee,\Delta)$ and its Weyl group $W$ will be identified with $N_{\bf{G}}(\bf{T})/\bf{T}$.

\paragraph{}
For any closed $F$-stable subgroup $\bf{H} \subseteq \bf{G}$
we shall denote the group of $F$-fixed points $\bf{H}^{F}$
by the corresponding non-boldfaced letter $H$.
\paragraph{}\label{par: Steinberg Weyl group fixed points}
The Steinberg morphism $F$ induces an automorphism of $W$ which will also be denoted by $F$.
The group $W^F$ of $F$-fixed points of $W$ is a finite Coxeter group whose simple reflections are the longest elements $w_J$ of standard parabolic subgroups $W_J$ where $J \subseteq \Delta$ is a $\tau$-orbit (cf. \cite[Theorem 1]{geck2014coxeter}).

\paragraph{}\label{par: Steinberg and Weyl group action}
Furthermore, the Steinberg morphism $F$ induces a $p$-endomorphism of based root data $(F,q,\tau)$ on $\wh{\RR}$ and we have  
$$\tau(w \alpha) = F(w) \tau(\alpha)$$
for all $w \in W$ and $\alpha \in \Phi$.

\paragraph{}
For any subset $I \subseteq \Delta$ we have a standard parabolic subgroup $\bf{P}_I$ with Levi decomposition
$\bf{P}_{I} = \bf{L}_I \bf{U}_I$ and all three groups are $F$-stable if $I$ is $\tau$-stable. The standard Levi subgroup $\bf{L}_I$ is a connected reductive group with root datum $\RR_I = (X,\Phi_I,X^\vee,\Phi_I^\vee)$ where $\Phi_I = \Phi \cap \Z I$.

\paragraph{}
We shall identify the Weyl group of $\bf{L}_{I}$ with the standard parabolic subgroup $W_I$ of $W$ generated by the reflections associated with the simple roots of $I$.
It follows from \ref{par: Steinberg Weyl group fixed points} that if $I \subseteq \Delta$ is $\tau$-stable, then $W_I^F$ is a standard parabolic subgroup of $W^F$.

\paragraph{}
The Levi subgroup $\bf{L}_I$ has finite index in its normalizer $N_{\bf{G}}(\bf{L}_{I})$ and one has an isomorphism
$$N_{W}(W_I)/W_I \cong N_{\bf{G}}(\bf{L}_{I})/\bf{L}_{I} $$
as every representative $\dw \in N_{\bf{G}}(\bf{T})$ of an element $w \in N_{W}(W_I)$ is an element of $N_{\bf{G}}(\bf{L}_I)$.

\paragraph{}
Moreover, \cite[Cor. 3]{howlett1980normalizers} tells us that the group $N_{W}(W_I)$ is a semidirect product 
$$N_{W}(W_I) = W_I \rtimes N_{W}(I)$$
where $N_{W}(I) = \{w \in W \:|\: w(I) = I\}$. Suppose now that $I$ is $\tau$-stable. 
Taking $F$-fixed points, we obtain 
$$N_{W^F}(W_I) = {W_I}^F \rtimes N_{W^F}(I)$$
where $N_{W^F}(I) = \{w \in W^F \:|\: w(I) = I\} = W^F \cap N_{W}(I)$
and so 
$$N_{W^F}(W_I)/{W_I}^F \cong N_{G}(\bf{L}_{I})/{L}_{I}.$$
Recall that we write $G$ for the $F$-fixed points $\bf{G}^F$ and similarly $L_I$ for $\bf{L}_I$.

\paragraph{}
On the other hand, $W_I^F$ is a standard parabolic subgroup 
of $W^F$ and thus \cite[Cor. 3]{howlett1980normalizers} implies that $N_{W^F}(W_I^F)$ is a semidirect product 
$$N_{W^F}(W_I^F) = W_I^F \rtimes N_{W^F}(I,\tau)$$
where, in view of \ref{par: Steinberg Weyl group fixed points},  the group 
$N_{W^F}(I,\tau)$ is defined to consist of those $w \in W^F$ for which for every $\tau$-orbit $J \subseteq I$ we have $w w_J w^{-1} = w_{J'}$ for some $\tau$-orbit $J' \subseteq I$.

\begin{lemma}
 We have $$N_{W^F}(I) = N_{W^F}(I,\tau).$$
\end{lemma}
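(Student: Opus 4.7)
The plan is to prove the two inclusions separately. For the forward direction $N_{W^F}(I) \subseteq N_{W^F}(I,\tau)$, I would use the identity from \ref{par: Steinberg and Weyl group action} combined with $F(w) = w$ for $w \in W^F$ to obtain $\tau \circ w = w \circ \tau$ on $\Phi$; consequently $w$ permutes the $\tau$-orbits in $\Delta$. If moreover $w(I) = I$, then $w$ permutes those $\tau$-orbits which are contained in $I$. For such a $\tau$-orbit $J$, setting $J' = w(J) \subseteq I$, one has $w W_J w^{-1} = W_{J'}$ as a standard parabolic subgroup (since $w(J) \subseteq \Delta$), and comparing longest elements of both sides yields $w w_J w^{-1} = w_{J'}$.

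For the reverse inclusion $N_{W^F}(I,\tau) \subseteq N_{W^F}(I)$, my first step is to upgrade the pointwise identity $w w_J w^{-1} = w_{J'}$ to the group-level identity $w W_J w^{-1} = W_{J'}$. This rests on the fact that the longest element of a standard parabolic has inversion set equal to the positive roots of that parabolic and therefore determines the parabolic as a subgroup of $W$. Taking the union over all $\tau$-orbits $J \subseteq I$ then yields $w W_I w^{-1} = W_I$, so $w \in N_W(W_I) \cap W^F$.

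Next I would invoke the already-established decomposition $N_W(W_I) = W_I \rtimes N_W(I)$ to write $w = w_1 w_2$ uniquely with $w_1 \in W_I$ and $w_2 \in N_W(I)$. Since $I$ is $\tau$-stable, both $W_I$ and $N_W(I)$ are $F$-stable, and uniqueness of the factorisation applied to $F(w) = w$ forces $w_1 \in W_I^F$ and $w_2 \in N_W(I) \cap W^F = N_{W^F}(I)$. By the forward inclusion already established, $w_2$ also lies in $N_{W^F}(I,\tau)$, whence $w_1 = w w_2^{-1}$ lies in $W_I^F \cap N_{W^F}(I,\tau)$. This intersection is trivial by the semidirect product $N_{W^F}(W_I^F) = W_I^F \rtimes N_{W^F}(I,\tau)$. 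Hence $w_1 = e$ and $w = w_2 \in N_{W^F}(I)$.

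The main technical point is the upgrade from the longest-element identity to the parabolic-subgroup identity, justified via inversion sets; once this is in place, the argument becomes a clean double application of the Howlett decomposition (to $W$ and to $W^F$) combined with $F$-equivariance of the relevant factors.
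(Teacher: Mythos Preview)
Your overall strategy coincides with the paper's: after the easy inclusion $N_{W^F}(I) \subseteq N_{W^F}(I,\tau)$, both you and the paper reduce the reverse inclusion to showing $N_{W^F}(I,\tau) \subseteq N_{W^F}(W_I)$, and then the two Howlett decompositions finish the job. The paper records this reduction tersely as ``it suffices to show\ldots''; your Step~3 spells out the missing reasoning and is correct.

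The gap is in your Step~1. You want to pass from $w w_J w^{-1} = w_{J'}$ to $w W_J w^{-1} = W_{J'}$, arguing that the inversion set of $w_J$ is $\Phi_J^+$ and hence the element $w_J$ determines the standard parabolic $W_J$. That much is true, but it does not give what you need: conjugation by $w$ does \emph{not} carry the inversion set of $w_J$ to the inversion set of $w w_J w^{-1}$, so knowing that $w_{J'}$ determines $W_{J'}$ produces no link between $W_{J'}$ and the conjugate subgroup $w W_J w^{-1}$. Indeed the implication is false without further hypotheses: in $W = S_5$ take $J = \{\alpha_1,\alpha_2\}$, so $W_J = S_{\{1,2,3\}}$ and $w_J = (1\ 3)$, and let $w = (2\ 4)$; then $w w_J w^{-1} = (1\ 3) = w_J$, yet $w W_J w^{-1} = S_{\{1,3,4\}} \neq W_J$. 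In your actual situation $J$ is a $\tau$-orbit and $w$ lies in $W^F$, so the conclusion may well survive under those extra constraints, but your inversion-set argument makes no use of them.

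The paper sidesteps this upgrade entirely. Rather than passing to a statement about the subgroups $W_J$, it works root by root: for $\alpha \in I$ with $\tau$-orbit $J$, it computes $w_{J'}(w\alpha) = w\, w_J(\alpha) = -w\alpha$ and then uses that the roots sent to their negatives by $w_{J'}$ lie in $\Phi_{J'}$ to conclude $w\alpha \in \Phi_{J'} \subseteq \Phi_I$. This yields $w \Phi_I = \Phi_I$, hence $w \in N_{W^F}(W_I)$, without ever needing the group-level identity $w W_J w^{-1} = W_{J'}$.
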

\begin{proof}
It is clear that $N_{W^F}(I) \subseteq N_{W^F}(I,\tau)$.
To show equality, it suffices to show that $N_{W^F}(I,\tau) \subseteq N_{W^F}(W_I)$.

So let $w \in N_{W^F}(I,\tau)$ and $\alpha \in I$.
Let $J \subseteq I$ be the $\tau$-orbit of 
$\alpha$ and $J' \subseteq I$ the $\tau$-orbit for which 
$w w_J w^{-1} = w_{J'}$.

We then have $$w_{J'} w \alpha = (w w_J w^{-1}) w \alpha = - w \alpha.$$
As the set of roots that are mapped to their inverses by $w_{J'}$ is precisely $\Phi_{J'}$, we have $w \alpha \in \Phi_{J'} \subseteq \Phi_I$.
As $\alpha \in I$ was arbitrary, we have $w \Phi_I = \Phi_I$ and consequently $w \in N_{W^F}(W_I)$.
\end{proof}
\begin{corollary}\label{cor: normalizers F-fixed and not}
We have $$N_{W^F}(W_I^F) =  N_{W^F}(W_I).$$
\end{corollary}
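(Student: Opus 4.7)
The plan is to derive the corollary almost immediately from the preceding lemma together with the two semidirect product decompositions established in the text just before the lemma.

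First I would observe the easy inclusion $N_{W^F}(W_I) \subseteq N_{W^F}(W_I^F)$: any $w \in W^F$ commutes with $F$, so if $w W_I w^{-1} = W_I$, then applying $F$-fixed points gives $w W_I^F w^{-1} = W_I^F$. For the reverse inclusion I would invoke the two decompositions
\[
 N_{W^F}(W_I) = W_I^F \rtimes N_{W^F}(I), \qquad N_{W^F}(W_I^F) = W_I^F \rtimes N_{W^F}(I,\tau)
\]
recorded before the lemma. Applying the lemma $N_{W^F}(I) = N_{W^F}(I,\tau)$, the two right-hand sides coincide as subsets of $W^F$, forcing the two normalizers to be equal.

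Alternatively, and perhaps more transparently, I would note that the lemma gives $N_{W^F}(I,\tau) \subseteq N_{W^F}(W_I)$, and combined with the trivial inclusion $W_I^F \subseteq W_I \subseteq N_{W^F}(W_I)$, the decomposition of $N_{W^F}(W_I^F)$ shows that every element of $N_{W^F}(W_I^F)$ already lies in $N_{W^F}(W_I)$.

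There is essentially no obstacle here: the content is already packed into the preceding lemma, and the corollary is just the translation of the equality $N_{W^F}(I) = N_{W^F}(I,\tau)$ (between the ``rotational'' parts of the two semidirect products) into an equality of the full normalizers, once one observes that the ``parabolic'' parts coincide and that conjugation by $W^F$-elements commutes with taking $F$-fixed points.
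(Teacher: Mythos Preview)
Your proposal is correct and follows exactly the implicit reasoning of the paper: the corollary is stated without proof because it is an immediate consequence of the preceding lemma together with the two semidirect product decompositions $N_{W^F}(W_I) = W_I^F \rtimes N_{W^F}(I)$ and $N_{W^F}(W_I^F) = W_I^F \rtimes N_{W^F}(I,\tau)$ recorded just before. Your observation of the easy inclusion and your use of these decompositions are precisely what the author has in mind.
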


\begin{lemma}\label{la: normalizer of Levi and rational points} 
 We have $(N_{G}(\bf{T}) \cap N_{G}(L_I)) {L_I} = N_{G}(\bf{L}_I)$. 
\end{lemma}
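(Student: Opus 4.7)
The strategy is straightforward: the inclusion $\supseteq$ is obvious, and for $\subseteq$ one takes $g \in N_G(\bf{L}_I)$ and produces a factorization $g = \dw \ell$ with $\dw \in N_G(\bf{T}) \cap N_G(\bf{L}_I)$ and $\ell \in L_I$. The plan is to track $g$ through the chain of identifications $N_{\bf{G}}(\bf{L}_I)/\bf{L}_I \cong N_W(W_I)/W_I$, find a suitable $F$-fixed Weyl group representative, and lift it.

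More precisely, I would proceed in three steps. First, by the $F$-equivariant isomorphism $N_{\bf{G}}(\bf{L}_I)/\bf{L}_I \cong N_W(W_I)/W_I$ recalled earlier in the section, the coset $g \bf{L}_I$ corresponds to an $F$-fixed coset $w W_I$ in $N_W(W_I)/W_I$. Second, using Howlett's semidirect decomposition $N_W(W_I) = W_I \rtimes N_W(I)$ together with the fact that $I$ is $\tau$-stable (so that both factors are $F$-stable), I may choose the representative $w$ to lie in $N_W(I)$, and the $F$-fixedness of $wW_I$ then forces $F(w) = w$. Hence $w \in N_{W^F}(I) \subseteq N_{W^F}(W_I)$.

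Third, I need to lift this $F$-fixed $w$ to an $F$-fixed element $\dw \in N_{\bf{G}}(\bf{T})$. Pick any lift $\dw_0 \in N_{\bf{G}}(\bf{T})$; since $F(w) = w$, the element $\dw_0^{-1} F(\dw_0)$ lies in the connected group $\bf{T}$, so Lang's theorem (in the form of \cite[1.4.8]{geck2020character}) supplies $t \in \bf{T}$ with $t^{-1} F(t) = \dw_0^{-1} F(\dw_0)$; then $\dw := \dw_0 t^{-1}$ satisfies $F(\dw) = \dw$ and still represents $w$. Since $w \in N_W(W_I)$, we have $\dw \in N_{\bf{G}}(\bf{L}_I)$, and by $F$-fixedness $\dw \in N_G(\bf{T}) \cap N_G(\bf{L}_I)$. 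Finally, since $g\bf{L}_I = \dw \bf{L}_I$, write $g = \dw \ell$ for some $\ell \in \bf{L}_I$; applying $F$ to both sides yields $\ell \in \bf{L}_I^F = L_I$.

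The only genuine obstacle is ensuring the two $F$-descent steps go through: finding an $F$-fixed representative in $N_W(W_I)$ for an $F$-fixed coset (handled by Howlett's $F$-stable semidirect decomposition) and then lifting to an $F$-fixed element of $N_{\bf{G}}(\bf{T})$ (handled by Lang's theorem on $\bf{T}$). Neither is deep, but one has to check that the bookkeeping with $F$-actions on $W$ versus on $N_{\bf{G}}(\bf{T})$ is consistent, which follows from the compatibility of the pinning with $F$.
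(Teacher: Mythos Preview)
Your labeling of the two inclusions is swapped: taking $g \in N_G(\bf{L}_I)$ and factoring it as $\dw\ell$ establishes $N_G(\bf{L}_I) \subseteq (N_G(\bf{T}) \cap N_G(L_I))\, L_I$, which is the inclusion $\supseteq$, not $\subseteq$. More importantly, the inclusion you dismiss as obvious is not: the left-hand side involves $N_G(L_I)$, the normalizer of the \emph{finite} group $L_I = \bf{L}_I^F$, not of the algebraic group $\bf{L}_I$. It is not immediate that an element normalizing both $\bf{T}$ and $L_I$ must normalize $\bf{L}_I$. This is precisely the step for which the paper invokes the preceding Corollary~\ref{cor: normalizers F-fixed and not}: an $x \in N_G(\bf{T}) \cap N_G(L_I)$ determines $w \in W^F$ with $w W_I^F w^{-1} = W_I^F$, whence $w \in N_{W^F}(W_I^F) = N_{W^F}(W_I)$ by that Corollary, so $x \in N_G(\bf{L}_I)$. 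One could alternatively argue via Zariski-density of $L_I$ in $\bf{L}_I$, but some argument is required here.

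For the inclusion you do argue in detail, your Weyl-group-plus-Lang approach is correct, but the paper's route is much shorter: given $x \in N_G(\bf{L}_I)$, the torus $x\bf{T}x^{-1}$ is again a maximally split torus of $\bf{L}_I$ and hence $L_I$-conjugate to $\bf{T}$; choosing $y \in L_I$ with $xy \in N_G(\bf{T})$ yields the factorization immediately, since $xy$ automatically normalizes $L_I$. Your method works, and the Howlett/Lang bookkeeping you outline goes through, but it reconstructs by hand what conjugacy of maximally split tori delivers in one line.
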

\begin{proof}
 If $x \in N_{G}(\bf{L}_I)$, then $x \bf{T} x^{-1}$ is a maximally split torus of $\bf{L}_I$ and thus $L_I$-conjugate to $\bf{T}$. It follows that there exists $y \in L_I$ such that 
 $xy \in N_{G}(\bf{T})$. We clearly also have $xy \in N_{G}(L_I)$ and so $x = xy y^{-1} \in (N_{G}(\bf{T}) \cap N_{G}(L_I)) {L_I}$.
 
 Conversely, an element $x \in N_{G}(\bf{T}) \cap N_{G}(L_I)$
 defines an element $w$ of $W^F$. Moreover, we have $x L_I x^{-1} = L_I$ and thus $w \in N_{W^F}(W_I^F) =  N_{W^F}(W_I)$ by Corollary \ref{cor: normalizers F-fixed and not} and so $x \in N_{G}(\bf{L}_I)$.
\end{proof}

\begin{lemma}\label{la: Weyl group representative special choice}
 For every element $w \in N_{W}(I)$, there exists a representative $\dw \in N_{\bf{G}}(\bf{T}) \cap N_{\bf{G}}(\bf{L}_I)$ such that 
 $$ \dw u_\alpha(x) \dw^{-1} =  u_{w \alpha}(x)$$
 for all $\alpha \in I$ and $x \in \G_a$.
 
 In particular, the map $w \mapsto c_{\dw}$ defines an action of
 $N_{W}(I)$ on $\bf{L_I}$ which is compatible with the Steinberg morphism $F$ if $I$ is $\tau$-stable.
 
 If $I$ is $\tau$-stable and $Z(\bf{G})$ is connected, then the representatives $\dw$ for $w \in N_{W^F}(I)$ can be chosen in $N_{{G}}(\bf{T}) \cap N_{{G}}(\bf{L}_I)$.
 
\end{lemma}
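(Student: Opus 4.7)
The plan is to apply Remark \ref{remark: isotypies and pinnings} not to $\bf{G}$ but inside the standard Levi $\bf{L}_I$, equipped with the induced pinning $(\bf{L}_I,\bf{B}\cap\bf{L}_I,\bf{T},\{u_\alpha\}_{\alpha\in I})$. First, pick any lift $n\in N_{\bf{G}}(\bf{T})$ of $w$. Since $w(I)=I$ one has $w(\Phi_I)=\Phi_I$, so $n\in N_{\bf{G}}(\bf{L}_I)$; and since $w$ permutes the simple roots of $\bf{L}_I$, it preserves $\Phi^+_I$, hence $c_n$ maps $\bf{B}\cap\bf{L}_I$ into itself. Thus $c_n|_{\bf{L}_I}$ is an isotypy $(\bf{L}_I,\bf{T})\to(\bf{L}_I,\bf{T})$ with $c_n(\bf{B}\cap\bf{L}_I)\subseteq\bf{B}\cap\bf{L}_I$, and Remark \ref{remark: isotypies and pinnings} furnishes $t\in\bf{T}$ such that $c_{t^{-1}n}=c_{t^{-1}}\circ c_n$ is compatible with the pinning of $\bf{L}_I$. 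Setting $\dot{w}:=t^{-1}n$, the associated $p$-morphism of root data has $q\equiv 1$ and sends the simple root $\alpha\in I$ to $w\alpha$, so the pinning-compatibility unpacks to $\dot{w}u_\alpha(x)\dot{w}^{-1}=u_{w\alpha}(x)$ for all $\alpha\in I$, as required.

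For the action property, let $w_1,w_2\in N_W(I)$. The product $\dot{w_1}\dot{w_2}$ lifts $w_1w_2$ and, by Remark \ref{remark: composition of compatible isotypies} applied in $\bf{L}_I$, is still compatible with its pinning. Proposition \ref{prop: bijection isotypies p-morphisms} inside $\bf{L}_I$ then forces $\dot{w_1}\dot{w_2}\in Z(\bf{L}_I)\,\dot{w_1w_2}$, whence $c_{\dot{w_1}}\circ c_{\dot{w_2}}=c_{\dot{w_1w_2}}$; so $w\mapsto c_{\dot{w}}$ is a group homomorphism $N_W(I)\to\mathrm{Aut}(\bf{L}_I)$.

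Now assume $I$ is $\tau$-stable. Applying $F$ to $\dot{w}u_\alpha(x)\dot{w}^{-1}=u_{w\alpha}(x)$ and using the pinning-compatibility of $F$ together with the identity $\tau(w\alpha)=F(w)\tau(\alpha)$ from \ref{par: Steinberg and Weyl group action} shows that $F(\dot{w})$ satisfies the defining relation of $\dot{F(w)}$ in the pinning of $\bf{L}_I$ (note that $F(w)\in N_W(I)$, as $I$ is $\tau$-stable). By the uniqueness up to $Z(\bf{L}_I)$, one obtains $c_{F(\dot{w})}=c_{\dot{F(w)}}$, which is exactly $F\circ c_{\dot{w}}=c_{\dot{F(w)}}\circ F$ on $\bf{L}_I$, i.e.\ the $F$-compatibility of the action.

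Finally, assume that $Z(\bf{G})$ is connected, so that $Z(\bf{L}_I)$ is connected as well (a standard consequence of the root datum description of the center). For $w\in N_{W^F}(I)$, the previous paragraph yields $z\in Z(\bf{L}_I)$ with $F(\dot{w})=z\dot{w}$. Lang--Steinberg \cite[1.4.8]{geck2020character} applied to the connected group $Z(\bf{L}_I)$ then produces $y\in Z(\bf{L}_I)$ with $y^{-1}F(y)=z^{-1}$; a direct check shows that $y\dot{w}$ is then an $F$-fixed representative of $w$ lying in $N_G(\bf{T})\cap N_G(\bf{L}_I)$, and it still satisfies the pinning-compatibility because $y$ is central in $\bf{L}_I$. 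The main subtlety throughout is that the entire argument must be carried out inside the Levi $\bf{L}_I$, so one has to make sure at each step that the relevant isotypies, pinnings, uniqueness statements, and Lang--Steinberg applications really do take place within this subgroup rather than in $\bf{G}$.
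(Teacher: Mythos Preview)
Your proof is correct and follows essentially the same route as the paper's: both apply Remark~\ref{remark: isotypies and pinnings} inside $\bf{L}_I$ to adjust an arbitrary lift, then use Proposition~\ref{prop: bijection isotypies p-morphisms} for the action and $F$-compatibility statements. The only cosmetic difference is in the final step, where the paper starts from an $F$-fixed lift and invokes Remark~\ref{remark: rational isotypies and pinnings}, whereas you start from the pinning-compatible lift already constructed and apply Lang--Steinberg directly in the connected group $Z(\bf{L}_I)$; these are equivalent arguments.
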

\begin{proof}
 For every $w \in N_{W}(I)$, we start by taking any representative $\dw' \in N_{\bf{G}}(\bf{T}) \cap N_{\bf{G}}(\bf{L}_I)$ and consider the isotypy 
 $c_{\dw'} : \bf{L_I} \to \bf{L_I}$. 
 Its corresponding $p$-morphism of root data is essentially 
 given by 
 $X \to X, \chi \mapsto w^{-1}\chi$. 
 By \ref{remark: isotypies and pinnings}, we find $t \in \bf{T}$ such that $\dw = t \dw'$
 has the desired property.
 
 Let $w_1,w_2 \in N_W(I)$ and $w_3 = w_1 w_2$. Given representatives $\dw_i \in  N_{\bf{G}}(\bf{T}) \cap N_{\bf{G}}(\bf{L}_I)$ for $w_i $ ($i = 1,2,3$)
  with the desired property, we have $c_{\dw_1}\circ c_{\dw_{2}} = c_{\dw_3}$ as morphisms $\bf{L}_I \to \bf{L}_I$ by 
 Proposition \ref{prop: bijection isotypies p-morphisms}.
 
 The map $w \mapsto c_{\dw}$ thus defines an action of $N_{W}(I)$ on $\bf{L_I}$. 
 
 Suppose now that $I$ is $\tau$-stable.
 Now, let $w_1 \in N_W(I)$ and set $w_2 = F(w_1)$.
 Given representatives $\dw_1$ and $\dw_2$ with the desired property, the isotypy 
 $$ c_{F(\dw_1)} \circ F  = F \circ c_{\dw_{1}} $$
 is compatible with the given pinning.
 

 It follows from Remark \ref{remark: composition of compatible isotypies} that $c_{F(\dw_1)}$ is compatible with the pinning
 and Proposition \ref{prop: bijection isotypies p-morphisms} 
 now implies that $F(\dw)$ and $\dot{F(w)}$ differ by an element of $Z(\bf{L_I})$.
 It follows that the defined action is compatible with $F$.
 
 Suppose now also that $Z(\bf{G})$ is connected.
 For $w \in N_{W^F}(I)$ we can start with any representative $\dw' \in N_{{G}}(\bf{T}) \cap N_{{G}}(\bf{L}_I)$ and then Remark \ref{remark: rational isotypies and pinnings} implies the existence of some $t' \in T'$ such that $\dw = t' \dw'$ has the desired properties.
\end{proof}

\begin{corollary}\label{la: normalizer semidirect product}
 Suppose that $Z(\bf{G})$ is connected and that representatives  $\dw \in N_{G}(\bf{L_I}) \cap N_{G}(\bf{T})$ are chosen as in Lemma \ref{la: Weyl group representative special choice} for $w \in N_{W^F}(I)$.
 
We consider the group $${N} = \langle \dw \:|\:  w\in N_{W^F}(I) \rangle \rangle \subseteq N_{G}(\bf{L_I}) \cap N_{G}(\bf{T})$$ and its subgroup $Z = N \cap L_I$. We have $Z \subseteq Z(\bf{L}_I)$ and  
 the map $$N_{W^F}(I) \to N_G(L_I)/Z, w \mapsto \dw Z$$ is a morphism of groups making the short exact sequence 
 $$ 0 \to L_I/Z \to N_G(\bf{L_I})/Z \to  N_{W^F}(I) \to 0$$
 split.
\end{corollary}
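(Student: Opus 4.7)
My plan is to exploit the fact that Lemma \ref{la: Weyl group representative special choice} provides a group \emph{action} $w \mapsto c_{\dw}$ of $N_{W^F}(I)$ on $\bf{L}_I$, not merely a collection of individual automorphisms. Restricting the canonical surjection $N_{\bf{G}}(\bf{T}) \to W$ to $N$ yields a group homomorphism $\pi : N \to N_{W^F}(I)$ sending each generator $\dw$ to $w$, and for every $n \in N$ the automorphism $c_n$ of $\bf{L}_I$ agrees with the action of $\pi(n)$. Because $N \subseteq G$, the kernel of $\pi$ is $N \cap \bf{T} = N \cap T$.

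The first thing I would prove is $Z \subseteq Z(\bf{L}_I)$. For $n \in Z = N \cap L_I$, the image $\pi(n)$ lies in $N_{W^F}(I)$ by construction of $N$, and it also lies in $W_I^F$, because $n \in L_I \cap N_{\bf{G}}(\bf{T}) = N_{\bf{L}_I}(\bf{T})$ represents an element of the Weyl group $W_I$ of $\bf{L}_I$ relative to $\bf{T}$. Howlett's decomposition $N_W(W_I) = W_I \rtimes N_W(I)$ forces $W_I \cap N_W(I) = \{1\}$, so $\pi(n) = 1$. Hence $c_n$ acts trivially on $\bf{L}_I$, which yields $n \in Z(\bf{L}_I)$.

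The homomorphism property of $w \mapsto \dw Z$ then drops out immediately: for $w_1, w_2 \in N_{W^F}(I)$, the element $\dw_1 \dw_2 \dot{w_1 w_2}^{-1}$ lies in $\ker(\pi) \subseteq T \subseteq L_I$ and in $N$, hence in $Z$. For the splitting, Corollary \ref{cor: normalizers F-fixed and not} combined with Lemma \ref{la: normalizer of Levi and rational points} and Howlett's decomposition $N_{W^F}(W_I) = W_I^F \rtimes N_{W^F}(I)$ gives the identification $N_G(\bf{L}_I)/L_I \cong N_{W^F}(I)$; since each $\dw$ lies in $N_G(\bf{L}_I)$ and projects to $w$ in this quotient, the map $w \mapsto \dw Z$ is a well-defined section of $N_G(\bf{L}_I)/Z \twoheadrightarrow N_{W^F}(I)$.

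The main delicate point will be the inclusion $Z \subseteq Z(\bf{L}_I)$, which relies crucially on the fact that Lemma \ref{la: Weyl group representative special choice} supplies an honest group action on $\bf{L}_I$ rather than just a compatible choice of representative for each Weyl group element; this is precisely what makes $\pi$ well defined as a group homomorphism and so allows the cycle between $L_I$-membership and $N_W(I)$-membership of $\pi(n)$ to collapse to the identity.
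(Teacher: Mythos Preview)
Your proof is correct and follows the same overall strategy as the paper's. Both arguments first use Howlett's decomposition $W_I \cap N_W(I) = \{1\}$ to see that $Z = N \cap L_I$ maps to the identity in $W$ (equivalently, $Z \subseteq N \cap T$), and then exploit the special choice of the $\dw$ from Lemma~\ref{la: Weyl group representative special choice} to obtain $Z \subseteq Z(\bf{L}_I)$; the homomorphism and splitting statements are handled identically.

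The one visible difference is in how you extract centrality. The paper verifies explicitly that each element of $N \cap T$ centralizes the simple root subgroups $\bf{U}_\alpha$ ($\alpha \in I$) from the formula $\dw u_\alpha(x)\dw^{-1} = u_{w\alpha}(x)$, and then invokes the pinning machinery (Proposition~\ref{prop: bijection isotypies p-morphisms}, Lemma~\ref{la: rational isotypies and pinnings}, Remark~\ref{remark: rational isotypies and pinnings}) to upgrade this to centrality in $\bf{L}_I$. You instead appeal directly to the group-action clause of Lemma~\ref{la: Weyl group representative special choice}: since $n \mapsto c_n|_{\bf{L}_I}$ and $n \mapsto c_{\dot{\pi(n)}}$ are two homomorphisms $N \to \mathrm{Aut}(\bf{L}_I)$ agreeing on generators, they agree everywhere, so $\pi(n)=1$ forces $c_n = \id$ on $\bf{L}_I$. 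This is a clean packaging of the same content and avoids the detour through the individual $\bf{U}_\alpha$.
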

\begin{proof}
 Every element of $N$ permutes the roots in $I$, but the only elements of $N_{L_I}(\bf{T})$ that do so lie in $T$,
 thus $N \cap L_I = N \cap T$. From the way the $\dw$ are chosen to act on $\bf{U}_\alpha$ ($\alpha \in I$) it follows that $N \cap T$ centralizes $\bf{U}_\alpha$ for all $\alpha \in I$.
 Now, Proposition \ref{prop: bijection isotypies p-morphisms}
 together with Lemma \ref{la: rational isotypies and pinnings}
 and Remark \ref{remark: rational isotypies and pinnings} implies that $N \cap T \subseteq Z(\bf{L_I})$ 
 
 We have canonical isomorphisms 
 $$N_{W^F}(I) \cong N_{G}(\bf{L_I})/L_I = N L_I/L_I \cong N/Z $$
 and the rest follows now immediately.
\end{proof}

\begin{remark}\label{remark: simply connected case}
 Suppose in addition to the assumptions of Lemma \ref{la: normalizer semidirect product} that $[\bf{G},\bf{G}]$
 is simply connected. Then the same is true for 
 $\bf{H}_I = [\bf{L}_I,\bf{L}_I]$ by  \cite[Prop. 12.14]{malle2011linear}
 and so 
 $\bf{H}_I$ is a direct product 
 $$ \bf{H}_I = \prod_{j} \bf{H}_{I_j} $$
 where $I = \bigcup_{j} I_j$ corresponds to the decomposition 
 $W_I^F = \prod_{j} W_{I_j}^F$ into irreducible components of the Coxeter group $W_I^F$.
 
 We then have the following diagram
 \[
  \xymatrix{
  N_{G}(\bf{L}_I) \ar@{->>}[d] & \\
  N_{G}(\bf{L}_I)/Z \ar[r]^{\sim} & L_I/Z \rtimes N_{W^F}(I) \\
  H_I \rtimes N_{W^F}(I) \ar@{->>}[r] &
  H_I Z/ Z \rtimes N_{W^F}(I) \ar@{^{(}->}[u] 
   \\
  \prod_{j} H_{I_j} \rtimes N_{W^F}(I) \ar[u]^{\sim} \ar@{->>}[r]
  &
  \prod_{j} H_{I_j} \rtimes N_{W^F}(I)/C 
  }
 \]
 where $C \subseteq N_{W^F}(I)$ is the centralizer of $I$.
 Note that the action of $N_{W^F}(I)$ on $\prod_{j} H_{I_j}$
 is induced by the action of $N_{W^F}(I)$ on $I$.
 
 Grouping those $I_j$ according to the types of 
 Coxeter groups $(W_{I_j},\tau)$ with automorphism 
 we find that $\prod_{j} H_{I_j} \rtimes N_{W^F}(I)/C$
 is isomorphic to a direct product of groups isomorphic to groups
 of the form $(H_{I_j} \rtimes A_{I_j}) \wr S_n$ where $A_{I_j}$ is the 
 group of diagram automorphisms of $\RR_{I_j}$.
 
 Note that since $A_{I_j}$ is isomorphic to a subgroup of $S_3$, it satisfies the assumption of Theorem \ref{thm: extending to wreath products}
 \end{remark}

 \section{Restriction along isotypies}
 \paragraph{}
In the following we consider an algebraically closed field
$k$ of characteristic different from $p$. Every connected 
reductive group $\bf{G}$ will be assumed to be equipped with a Steinberg morphism $F : G \to G$.

 \paragraph{}
Let $\varphi: \bf{G} \to \bf{G}'$ be an isotypy between connected reductive groups.
We shall write $\varphi: (\bf{G},F) \to (\bf{G}',F')$ to mean that $\varphi$ is compatible with the Steinberg morphisms, i.e.  
that $\varphi$ satisfies $F' \circ \varphi = \varphi \circ F$.

\paragraph{}
Recall that $\varphi :  (\bf{G},F) \to (\bf{G}',F')$ is called 
a regular embedding if $\varphi$ is an isomorphism onto its image and $Z(\bf{G}')$ is connected.
For the existence of regular embeddings, see \cite[Lemma 1.7.3]{geck2020character}.

\paragraph{}
Most of the properties of an isotypic morphism $\varphi$ can be deduced from its induced maps $\varphi$ between maximal tori
and thus from the induced maps between character and cocharacter 
groups.
\begin{lemma}\label{la: isotypic properties character cocharacter}
 Let $\varphi: (\bf{G},F) \to (\bf{G}',F')$ be an isotypic morphism, $\bf{T} \subseteq \bf{G}$ and $\bf{T}' \subseteq \bf{G}'$ maximally split tori such that $\varphi(\bf{T}) \subseteq \bf{T}'$. Denote with 
 $$ f_\varphi : X(\bf{T}') \to X(\bf{T}), \chi \mapsto \chi \circ \varphi $$
 the induced morphism between character groups and with 
 $$ f_\varphi^\vee: Y(\bf{T}) \to Y(\bf{T}'), \gamma \mapsto \varphi \circ \gamma $$
 the induced morphism between cocharacter groups.
 
 The following statements hold:
 \begin{enumerate}[(i)]
  \item The kernel of $\varphi$ is connected if and only 
        $\coker(f_\varphi)$ has no $p'$-torsion if and only if 
        $\coker(f_\varphi^\vee)$ has no $p'$-torsion.
  \item The kernel of $\varphi$ is finite if and only if 
        $\coker(f_\varphi)$ is finite.
  \item The morphism $\varphi$ is surjective if and only if 
        $\coker(f_\varphi^\vee)$ is finite.
  \item The morphism $\varphi$ is injective if and only if 
        $\coker(f_\varphi)$ is a finite $p$-group. 
 \end{enumerate}
\end{lemma}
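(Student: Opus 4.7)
The plan is to reduce each of (i)--(iv) to a statement about the restricted map $\varphi|_{\bf{T}} : \bf{T} \to \bf{T}'$, which is a morphism of tori, and then translate through the contravariant equivalence between diagonalizable groups over $\F$ and finitely generated abelian groups with no $p$-torsion given by $D \mapsto X(D)$, with quasi-inverse $M \mapsto \Hom_\Z(M,\F^*)$.

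First I would reduce each statement to the torus. Since $\ker(\varphi)$ is central it lies in $Z(\bf{G}) \subseteq \bf{T}$, so $\ker(\varphi) = \ker(\varphi|_{\bf{T}})$, which handles (i), (ii), and (iv). For (iii) I would prove that $\varphi$ is surjective iff $\varphi(\bf{T}) = \bf{T}'$: the ``if'' direction uses $\bf{G}' = Z(\bf{G}')\varphi(\bf{G})$ together with $Z(\bf{G}') \subseteq \bf{T}'$; the ``only if'' direction follows from a dimension count via $\dim \bf{G} = \dim \bf{T} + |\Phi|$ and the bijection $\tau : \Phi \to \Phi'$ supplied by the isotypy, yielding $\dim \varphi(\bf{T}) = \dim \bf{T}'$ and hence equality.

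Next I would translate via the character and cocharacter functors to obtain
\[
\ker(\varphi|_{\bf{T}}) \;\cong\; \Hom_\Z\bigl(\coker(f_\varphi),\F^*\bigr)
\qquad\text{and}\qquad
\bf{T}'/\varphi(\bf{T}) \;\cong\; \coker(f_\varphi^\vee)\otimes_\Z \F^*,
\]
the first from the equivalence above, the second from $\bf{T}'(\F) = Y(\bf{T}') \otimes_\Z \F^*$ and right-exactness of tensor. Each of (i)--(iv) then reads off the structure of these cokernels via Smith normal form, using two elementary facts in characteristic $p$: $\mu_n(\F)$ is cyclic of order equal to the $p'$-part of $n$, and $\Z/n\Z \otimes_\Z \F^* = 0$ since $\F^*$ is divisible. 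The remaining equivalence in (i), between the $p'$-torsion conditions on $\coker(f_\varphi)$ and on $\coker(f_\varphi^\vee)$, is then automatic, because $f_\varphi^\vee$ is the transpose of $f_\varphi$ with respect to the perfect pairings, and the Smith normal forms of a matrix and of its transpose agree, so their cokernels share the same torsion part.

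The main obstacle, such as it is, will be the careful bookkeeping around characteristic-$p$ phenomena for diagonalizable groups: both $\Hom_\Z(-,\F^*)$ and $-\otimes_\Z \F^*$ discard the $p$-part of cyclic factors, and one must track this precisely to distinguish ``finite'' from ``finite $p$-group'' from ``having no $p'$-torsion'' across (i), (ii), and (iv).
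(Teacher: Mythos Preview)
Your proposal is correct and follows a somewhat different route from the paper's own proof, though both begin the same way by reducing to the restriction $\varphi|_{\bf{T}}:\bf{T}\to\bf{T}'$ (the paper simply declares this reduction ``clear from the definition of isotypies'', while you spell out the surjectivity step via a dimension count).

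After that reduction the arguments diverge. The paper factorizes the torus morphism as
\[
\bf{T}\xrightarrow{\pi}\bf{T}/\ker(\varphi)^\circ\xrightarrow{con_\varphi}\bf{T}/\ker(\varphi)\xrightarrow{can_\varphi}\varphi(\bf{T})\xrightarrow{\iota}\bf{T}',
\]
applies $X(-)$, and identifies the torsion part of $\coker(f_\varphi)$ via a short exact sequence whose pieces are the $p'$-group $X(\ker(\varphi)/\ker(\varphi)^\circ)$ and a $p$-group coming from the purely inseparable bijection $can_\varphi$ (citing \cite[Prop.~3.8]{malle2011linear}); the free part is $X(\ker(\varphi)^\circ)$. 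It then repeats the analysis with $Y(-)$ (citing \cite[Prop.~1.11]{bonnafe2006caracteres}) to obtain the $\coker(f_\varphi^\vee)$ half of (i) and (iii). You instead compute the kernel and cokernel of $\varphi|_{\bf{T}}$ in one stroke as $\Hom_\Z(\coker(f_\varphi),\F^*)$ and $\coker(f_\varphi^\vee)\otimes_\Z\F^*$, then read everything off from Smith normal form; for the remaining equivalence in (i) you observe that a matrix and its transpose have the same elementary divisors, so $\coker(f_\varphi)$ and $\coker(f_\varphi^\vee)$ share the same torsion subgroup. Your argument is more self-contained (no external references needed beyond injectivity of $\F^*$ as a $\Z$-module and divisibility), while the paper's factorization makes the geometric source of each torsion contribution visible.
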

\begin{proof}
 It is clear from the definition of isotypies that
 it suffices to consider the case $\bf{G} = \bf{T}$ and $\bf{G}'=\bf{T}'$.

 We consider the factorization 
 $$
 \xymatrix{\bf{T} \ar[r]^-\pi 
    & \bf{T}/\ker(\varphi)^\circ \ar[r]^{con_\varphi} 
    & \bf{T}/\ker(\varphi) \ar[r]^{can_\varphi} 
    & \varphi(\bf{T}) \ar[r]^-\iota
    & \bf{T}' }$$
 where $can_\varphi$ is the canonical map $\bf{T}/\ker(\varphi) \to \varphi(\bf{T})$ and $con_\varphi$ is the quotient map $$\bf{T}/\ker(\varphi)^\circ \to \bf{T}/\ker(\varphi.)$$ Here, $con$ stands for \emph{connected}, as $con_\varphi$ can be seen as a measure of how disconnected $\ker(\varphi)$ is.
 
 Applying $X(-)$, we see that 
 the torsion part of $\coker(f_\varphi)$ 
 is isomorphic to $\coker(f_{can_\varphi} \circ f_{con_\varphi})$
 which fits into an exact sequence 
 $$\xymatrix{ 0 \ar[r]& \coker(f_{can_\varphi}) \ar[r]& \coker(f_{can_\varphi} \circ f_{con_\varphi}) \ar[r]& \coker(f_{con_\varphi}) \ar[r]&0.} $$
 Since $\ker(\varphi)/\ker(\varphi)^\circ$ is a $p'$-group, so is
 $\coker(f_{con_\varphi}) = X(\ker(\varphi)/\ker(\varphi)^\circ)$ 
 and $\coker(f_{can_\varphi})$
 is a $p$-group by \cite[Prop. 3.8]{malle2011linear}.
 
 Using $Y(-)$ instead, one obtains an analogous 
 description of the torsion part of $\coker(f_\varphi^\vee)$
 (also see \cite[Prop 1.11]{bonnafe2006caracteres}). 
 
 It follows now 
 that no $p'$-torsion in either 
 $\coker(f_\varphi^\vee)$ or $\coker(f_\varphi)$ is equivalent 
 to $\ker(\varphi)$ being connected.
 
 The torsion-free part of $\coker(f_\varphi)$ is isomorphic
 to $\coker(f_\pi) = X(\ker(\varphi)^\circ)$ and is thus 
 trivial if and only if $\ker(\varphi)$ is finite. 
 
 Similarly, the torsion-free part of $\coker(f_\varphi^\vee)$ is
 isomorphic to $\coker(f_\iota^\vee) = Y(\bf{T}'/\varphi(\bf{T}))$ which is trivial if and only if $\varphi$ is surjective.

 We have thus obtained (i), (ii) and (iii) and (iv) is just 
 a combination of (i) and (iii).
\end{proof}

\paragraph{}
The concept of duality (see \cite[1.5.17]{geck2020character}) plays a very important role in the context of isotypies. 
From the description via $p$-morphisms of root data and the Existence Theorem \cite[Theorem 3.8]{taylor2019rootdata}, it follows
that every isotypy $\varphi : G \to G'$
admits a dual morphism $\varphi^*: G'^* \to G^*$
such that $f_\varphi$ corresponds to $f_{\varphi^*}^\vee$ under duality and likewise $f_{\varphi}^\vee$ to $f_{\varphi^*}$.
In accordance with Proposition \ref{prop: bijection isotypies p-morphisms},
a dual morphism is only unique up to conjugation.
\begin{corollary}\label{cor: duality and properties of isotypies}
 Let $\varphi: (G,F) \to (G',F')$ be an isotypy and 
 let $ G^*$ and  $G'^*$ be groups in duality with $G$ and $G'$,
 respectively. Let $F^* : G^* \to G^*$ and $F'^*:G'^* \to G'^*$
 be Steinberg morphisms dual to $F$ and $F'$, respectively.
 Then there exists a dual morphism $\varphi^*: (G'^*,F'^*) \to (G^*,F^*)$ of $\varphi$ and 
 the following statements hold:
 \begin{enumerate}
  \item The kernel of $\varphi$ is connected if and only if the kernel of $\varphi^*$ is connected.
  \item $\varphi$ is surjective if and only if $\ker(\varphi^*)$ is finite.
  \item $\varphi$ is injective if and only if $\varphi^*$ is surjective and $\ker(\varphi^*)$ is connected. 
 \end{enumerate}
\end{corollary}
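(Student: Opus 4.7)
The approach is essentially to reduce everything to Lemma \ref{la: isotypic properties character cocharacter} by exploiting the fact that duality swaps characters with cocharacters, and to use the Existence Theorem \cite[Theorem 3.8]{taylor2019rootdata} to produce $\varphi^*$ in the first place. More precisely, I would proceed as follows.

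First, I would note the existence of $\varphi^*$. Since $\varphi$ is an isotypy, by Proposition \ref{prop: bijection isotypies p-morphisms} it corresponds (up to $\bf{T}'$-conjugation) to a $p$-morphism $\RR(\varphi) = (f_\varphi,q,\tau)$ of root data. Transposing this data gives a $p$-morphism between the dual root data, which by the Existence Theorem is induced by an isotypy $\varphi^*: \bf{G}'^* \to \bf{G}^*$. To ensure compatibility $F^* \circ \varphi^* = \varphi^* \circ F'^*$, I would invoke the standard argument that since $F$ and $F'$ dualize to $F^*$ and $F'^*$ on the cocharacter lattices and $\varphi$ intertwines $F$ and $F'$, the dual $p$-morphism intertwines $F'^*$ and $F^*$, so Proposition \ref{prop: bijection isotypies p-morphisms} (together with Lemma \ref{la: rational isotypies and pinnings} applied to a suitable choice of pinnings) gives a compatible representative $\varphi^*$. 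The key identities to carry into the rest of the proof are
\[
f_{\varphi^*} = f_\varphi^\vee \quad \text{and} \quad f_{\varphi^*}^\vee = f_\varphi,
\]
which hold by the very construction of duality.

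Given this, each of the three equivalences is a purely formal consequence of Lemma \ref{la: isotypic properties character cocharacter}. For (1), $\ker(\varphi)$ is connected iff $\coker(f_\varphi)$ has no $p'$-torsion (by (i) of the Lemma), which by the duality identities equals $\coker(f_{\varphi^*}^\vee)$; the same Lemma then says this is equivalent to $\ker(\varphi^*)$ being connected. For (2), $\varphi$ is surjective iff $\coker(f_\varphi^\vee) = \coker(f_{\varphi^*})$ is finite (by (iii) of the Lemma), which by (ii) is equivalent to $\ker(\varphi^*)$ being finite. For (3), $\varphi$ is injective iff $\coker(f_\varphi) = \coker(f_{\varphi^*}^\vee)$ is a finite $p$-group (by (iv)); writing ``finite $p$-group'' as ``finite and without $p'$-torsion'' and applying (iii) and (i) respectively to $\varphi^*$, this translates exactly to $\varphi^*$ being surjective with connected kernel.

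The main obstacle, such as it is, is the bookkeeping around the existence of $\varphi^*$ itself — in particular verifying that a compatible dual can be chosen so that the identities $f_{\varphi^*} = f_\varphi^\vee$ and $f_{\varphi^*}^\vee = f_\varphi$ hold on the nose — rather than the three equivalences themselves, each of which is a one-line translation once the Lemma and the duality identities are in hand. The bulk of the write-up will therefore go into the opening paragraph setting up $\varphi^*$; the rest is a direct application of the Lemma in a table of four cases.
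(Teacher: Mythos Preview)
Your approach is correct and matches the paper's: the three equivalences are read off directly from Lemma \ref{la: isotypic properties character cocharacter} via the duality identities $f_{\varphi^*} = f_\varphi^\vee$ and $f_{\varphi^*}^\vee = f_\varphi$, exactly as you outline.

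The only point of divergence is in how compatibility of $\varphi^*$ with the Steinberg morphisms is obtained. You propose to arrange pinnings and invoke Lemma \ref{la: rational isotypies and pinnings}; note, however, that this lemma takes compatibility with $F,F'$ as a \emph{hypothesis}, so what you really need is the argument behind it (Proposition \ref{prop: bijection isotypies p-morphisms} plus Remark \ref{remark: composition of compatible isotypies}) applied to a pinning-compatible lift of the dual $p$-morphism. The paper bypasses pinnings altogether: starting from any dual morphism $\psi$, it observes that $F^*\circ\psi$ and $\psi\circ F'^*$ induce the same $p$-morphism of root data and hence differ by $c_{t'}$ for some $t'\in\bf{T}^*$, then applies Lang--Steinberg (\cite[1.4.8]{geck2020character}) to write $t' = F^*(t^{-1})t$, so that $\varphi^* = c_t\circ\psi$ is compatible. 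This is a bit cleaner since it avoids checking that the dual $p$-morphism is based, but both routes are valid.
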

\begin{proof}
 Once we have established the existence of $\varphi^*$,
 Lemma
 \ref{la: isotypic properties character cocharacter}
 gives us the remaining claims.
 
 If $\psi : G'^* \to G^*$ is any dual morphism of $\varphi$,
 then $F' \circ \varphi = \varphi \circ F$ implies 
 $c_{t'} \circ \psi \circ F'^* = F^* \circ \psi$ for some 
 $t' \in T^*$ where $T^* \subseteq G^*$ is the dual torus of $T$.
 
 By \cite[1.4.8]{geck2020character}, there exists $t \in T^*$
 such that $t' = F^*(t^{-1})t$, so that $\varphi^* = c_{t} \circ \psi: (G'^*,F'^*) \to (G^*,F^*)$ is a dual morphism of $\varphi$. 
\end{proof}

\paragraph{}
If $\varphi: (\bf{G},F) \to (\bf{G}',F')$ is an isotypy, we have an induced morphism 
$ \varphi: G \to G' $ and so for every field $k$, we have 
induction and restriction functors 
$$ \Res_\varphi: k G'\Mod \to k G\Mod $$
and 
$$ \Ind_\varphi: k G\Mod \to k G'\Mod $$
which are biadjoint.

\paragraph{}
Note that $\Res_\varphi$ can be viewed as the composition
of restriction from $G'$ to $\varphi(G)$ with inflation from
$\varphi(G) \cong G/\ker(\varphi)^F$ to $G$.
In particular, $\Res_\varphi$ maps semisimple modules to semisimple ones.

\paragraph{}
If $\varphi' :  (\bf{G}',F') \to (\bf{G}'',F'')$ is another 
isotypy, then we have transitivity
$$ \Res_{\varphi' \circ \varphi} = \Res_{\varphi} \circ \Res_{\varphi'}$$
of restriction and similarly for induction.
%

\paragraph{}
The following result is a generalization of \cite[1.7.15]{geck2020character} from regular embeddings to arbitrary isotypies.
We shall abbreviate its conclusion by saying that $\Res_\varphi$ is multiplicity-free.
\begin{theorem}\label{thm: multiplicity-free}
 Let $\varphi: (\bf{G},F) \to (\bf{G}',F') $ be an isotypy and $X$ a simple $k G'$-module. Then $\Res_\varphi X$
 is multiplicity-free.
\end{theorem}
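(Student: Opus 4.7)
My plan is a two-step reduction to the already established multiplicity-freeness for regular embeddings (\cite[1.7.15]{geck2020character}).

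\emph{First, I reduce to the case that $Z(\bf{G}')$ is connected.} I choose a regular embedding $\iota' : (\bf{G}',F') \to (\wt{\bf{G}'}, \wt{F}')$, which exists by \cite[Lemma 1.7.3]{geck2020character}. For a simple $k G'$-module $X$, let $\wt{X}$ be any simple $k\wt{G}'$-submodule of $\Ind_{\iota'} X$. By Frobenius reciprocity $X$ occurs as a direct summand of $\Res_{\iota'}\wt{X}$, which is semisimple by Clifford's theorem since $\iota'(G') \trianglelefteq \wt{G}'$. Hence $\Res_\varphi X$ is a direct summand of $\Res_\varphi \Res_{\iota'}\wt{X} = \Res_{\iota' \circ \varphi}\wt{X}$, and since direct summands of multiplicity-free modules remain multiplicity-free, it suffices to prove the theorem for the isotypy $\iota' \circ \varphi$, whose target has connected center. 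I assume henceforth that $Z(\bf{G}')$ is connected.

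\emph{Second, I factor $\varphi = \pi \circ \iota$ where $\iota : (\bf{G}, F) \to (\wt{\bf{G}}, \wt{F})$ is a regular embedding and $\pi : (\wt{\bf{G}}, \wt{F}) \to (\bf{G}', F')$ is a surjective isotypy with connected kernel.} I pick any regular embedding $(\bf{G}, F) \hookrightarrow (\wt{\bf{G}}_1, \wt{F}_1)$ and set $\bf{S} = Z(\wt{\bf{G}}_1)$, a connected torus with the induced Steinberg morphism $F_{\bf{S}}$; the inclusion $\tau : Z(\bf{G}) \hookrightarrow \bf{S}$ is then $F$-equivariant and in particular injective on $\ker(\varphi)$. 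Let $\bf{T} = Z(\bf{G}') \times \bf{S}$, with Steinberg morphism $F' \times F_{\bf{S}}$, and embed $Z(\bf{G})$ into $\bf{T}$ via the $F$-equivariant closed embedding $z \mapsto (\varphi(z), \tau(z))$. Set
$$\wt{\bf{G}} = (\bf{G} \times \bf{T})/Z(\bf{G}),$$
where $Z(\bf{G})$ is embedded antidiagonally, with $\iota(g) = [(g, 1)]$ and $\pi([(g, (t_1, s))]) = \varphi(g) t_1$. One then verifies: $Z(\wt{\bf{G}}) \cong \bf{T}$ is connected and $\iota$ is injective, so $\iota$ is a regular embedding; $\pi$ is surjective because $\bf{G}' = Z(\bf{G}')\varphi(\bf{G})$; the kernel of $\pi$ is isomorphic to $\bf{S}$, hence connected; and the Steinberg morphism $F \times F' \times F_{\bf{S}}$ on $\bf{G} \times \bf{T}$ preserves the antidiagonal copy of $Z(\bf{G})$ by $F$-equivariance of $\varphi|_{Z(\bf{G})}$ and $\tau$, so it descends to a Steinberg morphism $\wt{F}$ on $\wt{\bf{G}}$ compatible with both $\iota$ and $\pi$.

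\emph{Finally, I conclude.} Since $\ker(\pi)$ is connected and $\wt{F}$-stable, \cite[1.4.8]{geck2020character} gives a surjection $\pi : \wt{G} \twoheadrightarrow G'$ with kernel $\ker(\pi)^{\wt{F}}$. Thus $\Res_\pi$ is inflation along the quotient $\wt{G} \twoheadrightarrow G'$ and preserves simplicity; in particular $\Res_\pi X$ is a simple $k\wt{G}$-module. Applying \cite[1.7.15]{geck2020character} to the regular embedding $\iota$ shows that $\Res_\iota(\Res_\pi X)$ is multiplicity-free, and by transitivity of restriction $\Res_\varphi X = \Res_\iota \Res_\pi X$ is multiplicity-free, as desired.

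\emph{Main obstacle.} The technical heart of the argument is the construction in the second step: I need to satisfy \emph{three} constraints simultaneously -- making $Z(\wt{\bf{G}})$ connected (so $\iota$ is a regular embedding), making $\ker(\pi)$ connected (so Lang--Steinberg yields a simple $\Res_\pi X$), and keeping everything equivariant with respect to the prescribed Steinberg morphisms. The trick of bootstrapping off an existing regular embedding of $\bf{G}$ to supply an auxiliary torus $\bf{S}$ with the required $F$-equivariant map $\tau$ is what makes the equivariance work without intricate choices; once that is in place, the remaining verifications are routine.
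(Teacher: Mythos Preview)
Your argument is correct and reaches the same destination as the paper---reduction to the known multiplicity-freeness result for regular embeddings---but via a different factorisation of $\varphi$. The paper works on the dual side: it takes a dual isotypy $\varphi^*:(\bf{G}'^*,F'^*)\to(\bf{G}^*,F^*)$, factors it as an embedding followed by a quotient with connected kernel followed by an inclusion, and then dualises back to obtain $\varphi=\psi_2\circ\psi\circ\psi_1$ with $\psi_1,\psi_2$ surjective with connected kernel and $\psi$ injective; only then does it compose with a regular embedding of $\bf{G}'$ to finish. You instead compose with a regular embedding of $\bf{G}'$ \emph{first} (reducing to $Z(\bf{G}')$ connected) and then build the factorisation $\varphi=\pi\circ\iota$ by hand, using an auxiliary torus $\bf{S}$ coming from a regular embedding of $\bf{G}$ to force both $Z(\wt{\bf{G}})$ and $\ker(\pi)$ to be connected simultaneously. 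Your route is more elementary in that it avoids the duality formalism of Corollary~\ref{cor: duality and properties of isotypies} entirely; the paper's route is cleaner once that formalism is in place, since the factorisation on the dual side is essentially tautological. One small point worth making explicit in your write-up: the verification that $\ker(\pi)\cong\bf{S}$ uses $\varphi^{-1}(Z(\bf{G}'))=Z(\bf{G})$, which follows from the connectedness argument $h\mapsto[g,h]$ landing in $\ker(\varphi)^\circ\subseteq Z(\bf{G})^\circ$ and hence vanishing on $[\bf{G},\bf{G}]$.
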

\begin{proof}
  Let $\bf{T} \subseteq \bf{G}$ be a maximally split maximal torus and let $\bf{T}' \subseteq \bf{G}'$ be the maximally split torus
 of $\bf{G}'$ given by $\bf{T}' = Z(\bf{G}')^\circ \varphi(\bf{T})$.
 
 Let $(\bf{G}^*,F^*)$ and $(\bf{G}'^*,F'^*)$ be groups in duality 
 with $(\bf{G},F)$ and $(\bf{G}',F')$ with $\bf{T}^* \subseteq \bf{G}^*$ and $\bf{T}'^* \subseteq \bf{G}'^*$ the maximally split tori  corresponding to $\bf{T}$ and $\bf{T}'$, respectively.
 
 By \ref{cor: duality and properties of isotypies}, there exists a dual morphism $\varphi^* : (\bf{G}'^*,F'^*) \to (\bf{G}^*,F^*)$ of $\varphi$. 
 
 We consider the factorization
 \[
  \xymatrix{  
   \bf{T}'^* \times_{\ker(\varphi^*)} \bf{G}'^* \ar[rr] && \varphi^*(\bf{G}') \ar[d] \\
   \bf{G}'^* \ar[rr]^{\varphi^*} \ar[u]   && \bf{G}^*
  }
 \]
 where the morphisms are given by
 $$\bf{G}'^* \to \bf{T}^* \times_{\ker(\varphi^*)} \bf{G}'^*, g' \mapsto (1,g') $$
 and
 $$\bf{T}'^* \times_{\ker(\varphi^*)} \bf{G}'^* \to \varphi(\bf{G}'^*), (t',g') \mapsto \varphi^*(g')$$
 as well as the inclusion $\varphi^*(\bf{G}'^*) \to \bf{G}^*$.
 
 All three of these are isotypies compatible with the 
 naturally induced Steinberg morphisms.
 
 Note that the first and third is an isomorphism onto its image
 while the second one is a morphism with connected kernel.
 
 By duality, we obtain a factorization 
 $$
 \xymatrix{  
   \bf{G}_1 \ar[rr]^{\psi} && \bf{G}_2 \ar[d]^{\psi_2} \\
   \bf{G} \ar[rr]^{\varphi} \ar[u]^{\psi_1}   && \bf{G'}
  }
 $$
 where 
 $\psi_1: (\bf{G},F) \to (\bf{G}_1,F_2)$ and $\psi_2: (\bf{G}_2,F_2) \to (\bf{G}',F')$
 are surjective with connected kernel and $\psi: (\bf{G}_1,F_1) \to (\bf{G}_2,F_2)$ is injective by Corollary (\ref{cor: duality and properties of isotypies}).
 
 In particular, the functors $\Res_{\psi_1}$ and $\Res_{\psi_2}$ are just inflations and map simple objects to simple ones.
 
 It thus suffices to show that $\Res_\varphi$ is multiplicity-free, when $\varphi$ is injective. In this case,
 $\varphi : G \to \varphi(G) = \varphi(\bf{G})^F$ is an isomorphism, so that we may as well replace $\bf{G}$ by $\varphi(\bf{G})$ and thus assume that $\varphi$ is an isomorphism
 onto its image.
 
 Now, let $\iota: (\bf{G}',F') \to (\bf{G}'',F'')$ be a regular embedding and  note that $\iota \circ \varphi$ is then a regular embedding, too.
 By \cite[1.7.15]{geck2020character}, we know that $\Res_{\iota \circ \varphi} X'$ is multiplicity-free for every simple $k G''$-module $X'$. As $X'$ runs through the isomorphism classes of simple $k G''$-modules, every simple $k G'$-module $X$ occurs as a direct summand in some $\Res_\iota X'$.
 It follows that $\Res_{\varphi} X$ is a summand of $\Res_\varphi \Res_{\iota} X' \cong \Res_{\iota \circ \varphi} X'$ and thus multiplicity-free.
 \end{proof}

\section{Unipotent representations}
\paragraph{}
From now on let $(K,R,k)$ denote an $\ell$-modular system
which is split for all finite groups under consideration.
In particular, all the results from previous sections (where $k$ was assumed to be algebraically closed) descent to this situation. We let $\Lambda \in \{K,k\}$.

\paragraph{}
Let $(\bf{G},F)$ be a connected reductive group and $F : \bf{G} \to \bf{G}$ a Steinberg morphism.
We have a decomposition 
$$\Irr_K(G) = \bigsqcup_{[s]} \EE(G,[s])$$
of the set of irreducible characters into Lusztig series 
(cf. \cite[2.6.2]{geck2020character}) as well as a decomposition 
$$\IBr_\ell(G) = \bigsqcup_{[s]} \wh{\EE}_\ell(G,[s])$$ 
of the set of irreducible Brauer characters into $\ell$-modular
Lusztig series (cf. \cite[9.12]{cabanes2004representation}).
Here, the first union is over $G^*$-conjugacy classes of $F^*$-stable semisimple elements in 
a group $(\bf{G}^*,F^*)$ in duality with $(\bf{G},F)$.
The second union is over $G^*$-conjugacy classes of $F^*$-stable semisimple $\ell'$-elements.

\paragraph{}
Representations and modules admitting characters in $\EE(G,[1])$
or $\wh{\EE}_\ell(G,[1])$ are called \emph{unipotent}.

\paragraph{}\label{par: Lusztig series reduction modulo ell}
The two decompositions are compatible with reduction modulo $\ell$. In fact, if $\chi \in \EE(G,[s])$ and $\psi \in \IBr_\ell(G)$ is a constituent of the reduction $\wh{\chi}$ of 
$\chi$ modulo $\ell$, then $\psi \in \wh{\EE}_\ell(G,[s_{\ell'}])$
where $s_{\ell'}$ denotes the $\ell'$-part of $s$.

\paragraph{}\label{par: Lusztig series linear characters}
Moreover, the decomposition into Lusztig series is compatible with the action of the group of linear characters.
In fact, every linear character $\lambda: G \to K^*$ whose kernel contains  
every $F$-stable unipotent element corresponds to a unique 
$z_\lambda \in Z(G^*)$ (cf. \cite[2.5.20]{geck2020character}) with the same order as $\lambda$
and one has bijections 
$$ \EE(G,[s]) \to \EE(G,[zs]), \chi \mapsto \lambda \chi$$
and
$$ \wh{\EE}_\ell(G,[s_{\ell'}]) \to \wh{\EE}_\ell(G,[z_{\ell'}s_{\ell'}]), \psi \mapsto \wh{\lambda} \psi$$
where the first one comes from \cite[2.5.21]{geck2020character}
and the second is a consequence of the first and 
\ref{par: Lusztig series reduction modulo ell}
%

\begin{lemma}\label{la: isotypies unipotent remains simple}
 Let $\varphi: (\bf{G},F) \to (\bf{G}',F')$ be an isotypy.
 For every simple unipotent $\Lambda G'$-module $X'$, its restriction $\Res_\varphi X'$ is a simple unipotent $\Lambda G$-module and every simple unipotent $\Lambda G$-module $X$
 is of the form $\Res_\varphi X'$ for a simple unipotent $\Lambda G'$-module $X'$ which is unique up to isomorphism.

 In particular, $Z(G)$ acts trivially 
 on every simple unipotent $\Lambda G$-module.
\end{lemma}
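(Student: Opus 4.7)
The plan is to combine the multiplicity-freeness of $\Res_\varphi$ from Theorem \ref{thm: multiplicity-free} with Lemma \ref{la: abelian multiplicity-free}, using the character-twist compatibility recalled in \ref{par: Lusztig series linear characters}: if $\lambda$ is a linear character of $G'$ whose kernel contains every $F$-stable unipotent element, then $\lambda \otimes X'$ lies in the Lusztig series indexed by $z_\lambda \in Z(G'^*)$, so for unipotent $X'$ one has $\lambda \otimes X' \cong X'$ precisely when $\lambda$ is trivial. First I would factor $\varphi$ as $\varphi = \psi_2 \circ \psi \circ \psi_1$ exactly as in the proof of Theorem \ref{thm: multiplicity-free}, where $\psi_1,\psi_2$ are surjective with connected kernel and $\psi$ is injective. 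Since restriction is transitive, it then suffices to prove the claim for each factor.

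For the surjective-with-connected-kernel factors $\psi_i$, Lang--Steinberg gives surjectivity on $F$-fixed points with kernel inside the (algebraic) center, so $\Res_{\psi_i}$ is plain inflation along a central quotient and in particular preserves simplicity; the bijectivity on simple unipotent modules reduces to showing that such a central kernel acts trivially on simple unipotent $\Lambda$-modules. For the injective factor $\psi : (\bf{G}_1,F_1) \to (\bf{G}_2,F_2)$, identify $G_1$ with its image in $G_2$. The isotypy property gives $\psi(\bf{G}_1) \supseteq [\bf{G}_2,\bf{G}_2]$, so $\bf{G}_2/\psi(\bf{G}_1)$ and hence $G_2/\psi(G_1)$ is abelian. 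Theorem \ref{thm: multiplicity-free} ensures $\Res_\psi X_2$ is multiplicity-free, and Lemma \ref{la: abelian multiplicity-free} reduces its simplicity to checking that no nontrivial linear character $\lambda \in \Irr_\Lambda(G_2/\psi(G_1))$ stabilizes $X_2$. Any such $\lambda$ is trivial on $\psi(G_1)$, and since $[\bf{G}_2,\bf{G}_2] \subseteq \psi(\bf{G}_1)$ contains every $F$-stable unipotent element, the Lusztig-series argument above yields $\lambda \otimes X_2 \not\cong X_2$ whenever $X_2$ is unipotent.

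For the surjectivity and uniqueness of the lift $X'$, I would take any simple $X'$ in which $X$ appears as a summand of $\Res_\varphi X'$ (Frobenius reciprocity); the compatibility of Lusztig series with the dual isotypy $\varphi^*$ via Corollary \ref{cor: duality and properties of isotypies} together with \ref{par: Lusztig series reduction modulo ell} forces $X'$ to be unipotent, so $\Res_\varphi X' \cong X$ by the simplicity already established, while uniqueness follows because two unipotent lifts would differ by a twist by a linear character trivial on $\varphi(G)$, which must be trivial by the argument of the previous paragraph. The \emph{in particular} statement then follows by applying the main assertion to the adjoint-type isotypy $\bf{G} \to \bf{G}/Z(\bf{G})^\circ$, together with a short dual-side argument for the component group $Z(\bf{G})/Z(\bf{G})^\circ$. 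The main obstacle is the compatibility of the (modular) unipotent Lusztig series with arbitrary isotypies; once this is extracted from Corollary \ref{cor: duality and properties of isotypies} and \ref{par: Lusztig series reduction modulo ell}, everything else is a mechanical combination of Theorem \ref{thm: multiplicity-free} and Lemma \ref{la: abelian multiplicity-free}.
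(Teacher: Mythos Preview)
Your overall strategy matches the paper's: combine Theorem~\ref{thm: multiplicity-free} with Lemma~\ref{la: abelian multiplicity-free}, using \ref{par: Lusztig series linear characters} to rule out nontrivial twists. Two points are worth flagging.

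First, the factorisation $\varphi = \psi_2 \circ \psi \circ \psi_1$ is unnecessary here. Since $[\bf{G}',\bf{G}'] \subseteq \varphi(\bf{G})$, a short computation (write $g_i' = z_i'\varphi(g_i)$ with $z_i' \in Z(\bf{G}')$ and check that $g_i^{-1}F(g_i) \in \varphi^{-1}(Z(\bf{G}')) = Z(\bf{G})$) shows $[G',G'] \subseteq \varphi(G)$, so $\varphi(G)\normal G'$ with abelian quotient and Lemma~\ref{la: abelian multiplicity-free} applies directly to the pair $(\varphi(G),G')$. This is what the paper does; your detour through the factorisation is correct but adds nothing.

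Second, and more seriously, your argument is circular as written. For the surjective-with-connected-kernel factors you say bijectivity ``reduces to showing that such a central kernel acts trivially on simple unipotent $\Lambda$-modules'', and your Frobenius-reciprocity step for surjectivity tacitly needs $\Ind_\varphi X \neq 0$, which again requires $(\ker\varphi)^F$ to act trivially on $X$. But you then deduce the ``in particular'' statement at the end \emph{from} the main assertion applied to $\bf{G}\to\bf{G}/Z(\bf{G})^\circ$ --- itself a surjective-with-connected-kernel isotypy. The paper breaks this loop by treating $\Lambda=K$ as known (\cite[2.3.14]{geck2020character}) and invoking \cite[3.3.24]{geck2020character} for the surjectivity of $X \mapsto \Res_\varphi X'$; alternatively, one can first prove the centre statement independently by lifting to characteristic~$0$ via \ref{par: Lusztig series reduction modulo ell} and then run your argument. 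Either way, the ``in particular'' cannot sit at the end of your chain of deductions without an independent input. Note also that the paper uses $\bf{G}\to\bf{G}/Z(\bf{G})$ (full centre, still an isotypy) rather than your two-step route through $Z(\bf{G})^\circ$ and the component group.
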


\begin{proof}
 These statements are well-known for the case $\Lambda = K$
 (see \cite[2.3.14]{geck2020character}).
 We already know from \ref{thm: multiplicity-free}, the 
 $\Res_\varphi X'$ is multiplicity-free. 
 Since
 $\varphi(G)$ contains all $F'$-stable unipotent
 elements of $G'$, the kernel of 
 every $\lambda \in \Irr_\Lambda(G'/\varphi(G))$ contains  
 all $F'$-stable unipotent
 elements of $G'$.
 It follows from \ref{par: Lusztig series linear characters} that $\lambda \otimes X'$ is not unipotent
 unless $\lambda$ is trivial.
 
 It follows that \ref{la: abelian multiplicity-free}
 applies and so $\Res^{G'}_{\varphi(G)} X'$ is simple.
 Since $\Res_\varphi X'$ is just the inflation of
 $\Res^{G'}_{\varphi(G)} X'$ to $G$ along $\varphi$,
 the first claim follows.
 
 Furthermore, this also implies that distinct simple unipotent $\Lambda G'$-modules restrict to distinct 
 $\Lambda \varphi(G)$-modules, because $\Res_\varphi X' \cong \Res_\varphi X''$ implies that we have $X'' \cong \lambda \otimes X'$ for some $\lambda \in \Irr_\Lambda(G'/\varphi(G))$ 
 (cf. \cite[Section 2.8]{kowalski2014introduction}). 
 
 It remains to show that every simple unipotent 
 $\Lambda G$-module is of the form $\Res_\varphi X'$
 for a simple unipotent $\Lambda G'$-module $X'$.
 This follows essentially from \cite[3.3.24]{geck2020character} together with our previous 
 remarks in \ref{par: Lusztig series reduction modulo ell}.
 Note that the assumption of a connected kernel in 
 \cite[3.3.24]{geck2020character} is redundant 
 as one can write any isotypy as a composition of isotypies
 which have connected kernels (see, for instance, the proof of Theorem \ref{thm: multiplicity-free}).
 
 The last claim about the center $Z(G)$ acting trivially 
 on simple unipotent representations follows by 
 considering the isotypy $\bf{G} \to \bf{G}/Z(\bf{G})$.
\end{proof}
\paragraph{}
Finally, we have all the tools to obtain our main result. 
\begin{theorem}
 Let $\bf{L} \subseteq \bf{G}$ be a split Levi subgroup of $\bf{G}$, let $X$ be 
 a simple unipotent $\Lambda L$-module. 
 
 Then $X$ extends to a module over the stabilizer $N_{G}(\bf{L},X)$ of $X$ inside the normalizer 
 $N_{G}(\bf{L})$.
\end{theorem}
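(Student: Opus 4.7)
The plan is to chain together the three main tools developed in this paper: Theorem \ref{thm: extending to wreath products}, the structural description of $N_G(\bf{L})$ in Remark \ref{remark: simply connected case}, and Lemma \ref{la: isotypies unipotent remains simple}. A final compatibility check is then handled using Lemma \ref{la: abelian multiplicity-free}.

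First I would reduce to the case that $Z(\bf{G})$ is connected and $[\bf{G},\bf{G}]$ is simply connected by pulling back along a $z$-extension $\pi:(\bf{G}_1,F_1)\to(\bf{G},F)$. Lemma \ref{la: isotypies unipotent remains simple} identifies $X$ with the restriction of a unique simple unipotent $\Lambda L_1$-module $X_1$; since $\ker(\pi)$ is a connected central torus, Lang-Steinberg yields surjections $L_1\twoheadrightarrow L$ and $N_{G_1}(\bf{L}_1)\twoheadrightarrow N_G(\bf{L})$ with common kernel $\ker(\pi)^F\subseteq L_1$, and this kernel acts trivially on $X_1$. Any extension of $X_1$ to $N_{G_1}(\bf{L}_1,X_1)$ therefore descends to the desired extension of $X$.

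With $\bf{G}$ reduced as above, I would invoke Remark \ref{remark: simply connected case}. Taking $Z$ to be the central subgroup appearing there, Lemma \ref{la: isotypies unipotent remains simple} shows that $Z\subseteq Z(L)$ acts trivially on $X$, so $X$ is canonically an $L/Z$-module and $N_G(\bf{L},X)/Z\cong L/Z\rtimes W^{\ast}$ with $W^{\ast}=N_{W^F}(I,X)$. Writing $H_I=[\bf{L},\bf{L}]^F=\prod_j H_{I_j}$ and $X_0=\Res^L_{H_I}X$, the same Lemma applied to the isotypy $[\bf{L},\bf{L}]\hookrightarrow\bf{L}$ shows that $X_0$ is simple unipotent, and its stabilizer $W_0^{\ast}=N_{W^F}(I,X_0)$ contains $W^{\ast}$. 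The Remark further identifies $H_I\rtimes N_{W^F}(I)/C$ with a direct product of wreath products $(H_{I_j}\rtimes A_{I_j})\wr S_{n_k}$; since each $A_{I_j}$ embeds in $S_3$ and every subgroup of $S_3$ has trivial Schur multiplier, the hypothesis of Theorem \ref{thm: extending to wreath products} is satisfied, producing an extension $\wt{X_0}$ of $X_0$ to $H_I\rtimes W_0^{\ast}$ (inflated through the quotient by $C$, which acts trivially on $H_I$).

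Finally I would glue $\wt{X_0}\vert_{H_I\rtimes W^{\ast}}$ with the $L$-action $\rho_X$ on $X$ by setting $\rho(l,w)=\rho_X(l)\,\wt{X_0}(w)$. For this to define a representation of $L\rtimes W^{\ast}$ the key identity is $\wt{X_0}(w)\,\rho_X(l)\,\wt{X_0}(w)^{-1}=\rho_X(w\cdot l)$ for all $w\in W^{\ast}$ and $l\in L$. This holds by construction when $l\in H_I$; since $L/H_I$ is abelian (using $\bf{L}=Z(\bf{L})[\bf{L},\bf{L}]$ with $Z(\bf{L})$ connected in our reduced setting), both sides are simple $L$-actions on $X$ extending the same $H_I$-action, so they differ by a character $\lambda_w\in\Irr_k(L/H_I)$ satisfying $\lambda_w\otimes X\cong X$. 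Since $Z$ acts trivially on both $X$ and $\lambda_w\otimes X$, this $\lambda_w$ factors through $L/(H_I\cdot Z)$, and Lemma \ref{la: abelian multiplicity-free} applied to $L/Z$ with normal subgroup $H_IZ/Z$ (whose restriction $X\vert_{H_IZ}$ is simple) forces $\lambda_w=1$. The resulting $L/Z\rtimes W^{\ast}$-module structure on $X$ inflates to the desired extension on $N_G(\bf{L},X)$. The main obstacle is this final compatibility check: the wreath-product theorem yields an extension of $X_0$ with no a priori relation to $\rho_X$, and Lemma \ref{la: abelian multiplicity-free} is the precise ingredient needed to ensure the two structures glue coherently.
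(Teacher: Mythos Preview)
Your proposal is correct and follows essentially the same route as the paper: reduce to the case where $Z(\bf{G})$ is connected and $[\bf{G},\bf{G}]$ is simply connected, invoke the structural description of Remark~\ref{remark: simply connected case}, apply Theorem~\ref{thm: extending to wreath products} to extend $X_0=\Res^{L}_{H_I}X$, and then glue the resulting $N_{W^F}(I,X)$-action with the $L$-action on $X$.

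Two minor differences are worth recording. First, you perform the reduction in a single step via a $z$-extension $\pi:(\bf{G}_1,F_1)\to(\bf{G},F)$, whereas the paper splits it into a regular embedding followed by the surjective isotypy of \cite[1.7.13]{geck2020character}; your version is slightly more economical and equally valid. Second, for the final compatibility check the paper argues directly that the restriction map
\[
\Hom_{\Lambda L}(\ad(w)X,X)\longrightarrow \Hom_{\Lambda H_I}(\ad(w)X_0,X_0)
\]
is an isomorphism (both sides being one-dimensional by Lemma~\ref{la: isotypies unipotent remains simple}), which immediately forces $\wt{X_0}(w)$ to be $L$-equivariant; your argument via the character $\lambda_w$ and Lemma~\ref{la: abelian multiplicity-free} is an equivalent unpacking of the same Schur's-lemma reasoning. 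Note also that in fact $W_0^\ast=W^\ast$, again by the bijectivity part of Lemma~\ref{la: isotypies unipotent remains simple}, so no restriction of the extension is needed.
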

\begin{proof}
 First, let $\iota : (\bf{G},F) \to (\bf{G}',F')$ be a regular
 embedding and set $\bf{L}' = Z(\bf{G}')\varphi(\bf{L})$ which is a
 split Levi subgroup of $\bf{G}'$.
 
 According to Lemma \ref{la: isotypies unipotent remains simple} there exists a simple unipotent $\Lambda L'$-module $X'$ 
 such that $\Res_\iota X' = X$.
 
 Clearly, we have $\iota(N_{G}(\bf{L},X)) \subseteq N_{G'}(\bf{L}',X')$ and thus if $X'$ extends to a module over 
 $N_{G'}(\bf{L}',X')$, then $X$ extends to a module over 
 $N_{G}(\bf{L},X)$.
 
 We thus may assume that $Z(\bf{G})$ is connected. According to \cite[1.7.13]{geck2020character}
 there exists a surjective isotypy $\varphi: (\bf{G}_0,F_0) \to (\bf{G},F)$
 such that $Z(\bf{G}_0)$ and $\ker(\varphi)$  are connected
 and that the derived subgroup $[\bf{G}_0,\bf{G}_0]$ is simply-connected.
 
 We set $\bf{L}_0 = \varphi^{-1}(\bf{L})$ as well as $X_0 = \Res_\varphi X$ which is just the inflation of $X$ to $L_0$. 
 We have 
$\varphi(N_{G_0}(\bf{L}_0,X_0)) = N_{G}(\bf{L},X)$ and if $X_0$
extends to a $\Lambda N_{G_0}(\bf{L}_0,X_0)$-module, then
$\ker(\varphi)^{F_0}$ acts trivially on this module (as it acts trivially on $X_0$) and thus induces a $\Lambda N_{G}(\bf{L},X)$-module which extends $X$.

We may thus also assume that the derived subgroup of $\bf{G}$
is simply connected. Moreover, we may assume that $\bf{L} = \bf{L}_I$ is a standard Levi subgroup so that we are in the situation 
described in Remark \ref{remark: simply connected case} and 
we shall make use of the notation which was used there. 

The center $Z(\bf{L})$ acts trivially on $X$ by Lemma \ref{la: isotypies unipotent remains simple} and its restriction $X_0 = \Res^{L_I}_{H_I} X$ to $H_I$
is simple by the same Lemma because the inclusion $\bf{H}_I \to \bf{L}_I$ is an isotypy. In particular, we can consider $X$ as a $\Lambda L_I/Z$-module and it suffices to show that 
$X$ extends to a $N_{G}(\bf{L}_I,X)/Z$-module.

Theorem \ref{thm: extending to wreath products} implies that $X_0$ extends to a $\Lambda (H_I \rtimes N_{W}(I,X))$-module.  
Finally, as we have 
$$\Hom_{\Lambda L}(\ad(w)X,X) \cong \Hom_{\Lambda H_I}(\ad(w)X_0,X_0),$$
the action of $N_{W}(I,X)$ on $X_0$ is compatible with the action of $L_I$ on $X_0$. Thus the extension of $X_0$
to $H_I \rtimes N_{W}(I,X)$ can be considered as an extension
of $X$ as a $\Lambda (L_I/Z \rtimes N_{W}(I,X))$-module which is the same as a $\Lambda N_{G}(\bf{L}_I,X)/Z$-module
by Remark \ref{remark: simply connected case} and so the claim follows.
\end{proof}

\bibliographystyle{abbrv}

\end{document}